\newtheorem{theorem}{Theorem}[section]
\newtheorem{proposition}[theorem]{Proposition}
\newtheorem{lemma}[theorem]{Lemma}
\newtheorem{corollary}[theorem]{Corollary}
\newtheorem{proof}{\textmd{\textit{Proof.}}}
\newtheorem{remark}[theorem]{Remark}
\newtheorem{definition}[theorem]{Definition}
\newcommand{\qedd}{\hfill \Box}
\newcommand{\ve}{\varepsilon}
\newcommand{\del}{\partial}
\newcommand{\lra}{\longrightarrow}
\newcommand{\N}{\ensuremath{\mathbb{N}}}
\newcommand{\Z}{\ensuremath{\mathbb{Z}}}
\newcommand{\R}{\ensuremath{\mathbb{R}}}
\newcommand{\cC}{\ensuremath{\mathcal{C}}}
\newcommand{\cL}{\ensuremath{\mathcal{L}}}
\newcommand{\bb}{\ensuremath{\mathbf{b}}}
\newcommand{\bK}{\ensuremath{\mathbf{K}}}
\newcommand{\bS}{\ensuremath{\mathbf{S}}}
\newcommand{\bL}{\ensuremath{\mathbf{L}}}
\def\vol{\mathop{\mathrm{vol}}\nolimits}
\def\div{\mathop{\mathrm{div}}\nolimits}
\def\loc{\mathop{\mathrm{loc}}\nolimits}
\def\Ric{\mathop{\mathrm{Ric}}\nolimits}
\def\Cut{\mathop{\mathrm{Cut}}\nolimits}
\def\Id{\mathop{\mathrm{Id}}\nolimits}
\def\CD{\mathop{\mathsf{CD}}\nolimits}
\def\MCP{\mathop{\mathsf{MCP}}\nolimits}
\def\Isom{\mathop{\mathsf{Isom}}\nolimits}
\newcommand{\Nabla}{\bm{\nabla}}
\newcommand{\Lap}{\bm{\Delta}}
\newcommand{\rev}[1]{\overleftarrow{#1}}
\newcommand{\wt}[1]{\widetilde{#1}}
\newcommand{\ol}[1]{\overline{#1}}
\title{Splitting theorems for Finsler manifolds\\ of nonnegative Ricci curvature}
\author{Shin-ichi Ohta\thanks{Department of Mathematics, Kyoto University,
Kyoto 606-8502, Japan ({\sf sohta@math.kyoto-u.ac.jp});
Supported in part by the Grant-in-Aid for Young Scientists (B) 23740048.}}
\date{\today}
\begin{document}

\maketitle

\begin{abstract}
We investigate the structure of a Finsler manifold
of nonnegative weighted Ricci curvature including a straight line,
and extend the classical Cheeger-Gromoll-Lichnerowicz splitting theorem.
Such a space admits a diffeomorphic, measure-preserving splitting in general.
As for a special class of Berwald spaces, we can perform the isometric splitting
in the sense that there is a one-parameter family of isometries
generated from the gradient vector field of the Busemann function.
A Betti number estimate is also given for Berwald spaces.

\end{abstract}

\section{Introduction}

The Ricci curvature is one of the most important quantities in geometry and analysis
on Riemannian manifolds.
The Ricci curvature (or tensor) plays prominent roles in various ways,
from the classical comparison theorems due to Rauch and Bishop
to Hamilton and Perelman's celebrated theory of the Ricci flow.
Recently, it turned out that the Ricci curvature is quite useful
also in the study of Finsler manifolds.
A Finsler manifold is a manifold endowed with a (Minkowski) norm on each tangent space.
Inspired by the theory of weighted Riemannian manifolds,
the \emph{weighted Ricci curvature} $\Ric_N$ was introduced in \cite{Oint}
for a Finsler manifold $(M,F)$ equipped with an arbitrary measure $m$ on $M$,
where $N \in [\dim M,\infty]$ is a parameter (see Definition~\ref{df:wRic}).
Bounding $\Ric_N$ from below by $K \in \R$ (i.e., $\Ric_N(v) \ge KF(v)^2$)
is equivalent to Lott, Sturm and Villani's \emph{curvature-dimension condition}
$\CD(K,N)$ (\cite{Oint}).
This equivalence has many applications via the general theory of the curvature-dimension condition,
such as the Bishop-Gromov volume comparison and the Lichnerowicz inequality
on the spectral gap (see \cite{Oint}).
Furthermore, the Laplacian comparison theorem for a natural nonlinear Laplacian (\cite{OShf})
as well as the Bochner-Weitzenb\"ock formula (\cite{OSbw}) hold.

The aim of this article is to generalize another fundamental theorem in comparison geometry
involving the Ricci curvature, Cheeger and Gromoll's \emph{splitting theorem} (\cite{CG1}),
that asserts that a Riemannian manifold $(M,g)$ of nonnegative Ricci curvature including
a straight line admits an isometric splitting $M=M' \times \R$.
This splitting theorem was extended to weighted Riemannian manifolds by Lichnerowicz
and others (\cite{Li}, \cite{FLZ}, \cite{WW}), and to (weighted) Alexandrov spaces by introducing
appropriate notions of the lower Ricci curvature bound (\cite{KS}, \cite{ZZ}).

In the Finsler case, as normed spaces (equipped with the Lebesgue measure)
have the nonnegative Ricci curvature, the isometric splitting can not be expected.
Nevertheless, our first main result (Corollary~\ref{cr:dsplit}) asserts that
a diffeomorphic and measure-preserving splitting holds for general Finsler manifolds.
The proof is essentially parallel to the Riemannian case, thanks to the Laplacian comparison
theorem in \cite{OShf}.
One can describe the splitting in more details for Berwald spaces.
Roughly speaking, a Berwald space is a Finsler manifold modeled by a single normed space
(see Proposition~\ref{pr:Ich}).
For example, Riemannian manifolds, normed spaces, and their products are Berwald spaces.
In the Berwald case, we can also split the metric in the sense that there exists an $l$-parameter family
of isometries $\varphi_p:M \lra M$, $p \in \R^l$, such that $\bigsqcup_{p \in \R^l}\varphi_p(\ol{M})=M$,
where the $(\dim M-l)$-dimensional submanifold $\ol{M} \subset M$ is characterized
by the property that any Busemann function is constant on $\ol{M}$
(see Theorem~\ref{th:Bsplit} for the precise statement).
A Betti number estimate along the lines of \cite{CG1}, \cite{CG2} is also given in the Berwald case
(Theorem~\ref{th:Betti}).

The study of Finsler manifolds has an extra importance from the view of the curvature-dimension condition.
As we mentioned above, Finsler manifolds give a nice class of model spaces
satisfying the curvature-dimension condition.
One can use this class as a test to see what properties of Riemannian manifolds of
Ricci curvature bounded below can be or can not be expected to hold
for general metric measure spaces satisfying the curvature-dimension condition
(in other words, to see either such a property depends only on the `Ricci curvature bound',
or it also requires that the space is `Riemannian').
For instance, the contraction property of the heat flow with respect to the $L^2$-Wasserstein distance
fails on (non-Riemannian) Finsler manifolds (\cite{OSnc}).
Recently, the contraction property was shown in \cite{AGS} for metric measure spaces
by assuming $\CD(K,\infty)$ and the linearity of the heat flow
(the linearity means that the space is `Riemannian' in a sense).
The Bochner-Weitzenb\"ock formula also holds in such a case (see also \cite{GKO}),
whereas it is unclear how to remove the linearity.

The article is organized as follows.
We review necessary notions in geometry and analysis on Finsler manifolds
in Section~\ref{sc:prel}.
Section~\ref{sc:Buse} is devoted to the study of Busemann functions.
We show splitting theorems in Sections~\ref{sc:dsplit}, \ref{sc:Bsplit}
in the general and Berwald cases, respectively.

\section{Geometry and analysis on Finsler manifolds}\label{sc:prel}

We review the basics of Finsler geometry (we refer to \cite{BCS} and \cite{Shlec} for further reading),
and introduce the weighted Ricci curvature and the nonlinear Laplacian
studied in \cite{Oint} and \cite{OShf} (see also \cite{GS}).
Throughout the article, let $M$ be a connected, $n$-dimensional $\cC^{\infty}$-manifold
without boundary such that $n \ge 2$.
We fix an arbitrary positive $\cC^{\infty}$-measure $m$ on $M$ as our base measure.

\subsection{Finsler manifolds}

Given a local coordinate $(x^i)_{i=1}^n$ on an open set $\Omega \subset M$,
we will always use the coordinate $(x^i,v^j)_{i,j=1}^n$ of $T\Omega$ such that
\[ v=\sum_{j=1}^n v^j \frac{\del}{\del x^j}\Big|_x \in T_xM
 \qquad \text{for}\ x \in \Omega. \]

\begin{definition}[Finsler structures]\label{df:Fstr}
A nonnegative function $F:TM \lra [0,\infty)$ is called
a \emph{$\cC^{\infty}$-Finsler structure} of $M$ if the following three conditions hold.
\begin{enumerate}[(1)]
\item(Regularity)
$F$ is $\cC^{\infty}$ on $TM \setminus 0$,
where $0$ stands for the zero section.

\item(\emph{Positive $1$-homogeneity})
It holds $F(cv)=cF(v)$ for all $v \in TM$ and $c>0$.

\item(\emph{Strong convexity})
The $n \times n$ matrix
\begin{equation}\label{eq:gij}
\big( g_{ij}(v) \big)_{i,j=1}^n :=
 \bigg( \frac{1}{2}\frac{\del^2 (F^2)}{\del v^i \del v^j}(v) \bigg)_{i,j=1}^n
\end{equation}
is positive-definite for all $v \in TM \setminus 0$.
\end{enumerate}
We call such a pair $(M,F)$ a \emph{$\cC^{\infty}$-Finsler manifold}.
\end{definition}

That is to say, $F|_{T_xM}$ is a smooth \emph{Minkowski norm} for every $x \in M$,
and $F$ varies smoothly also in the horizontal direction.
We will denote the \emph{unit tangent sphere bundle} by $UM:=TM \cap F^{-1}(1)$.
For $x,y \in M$, we define the \emph{distance} from $x$ to $y$ in a natural way by
\[ d(x,y):=\inf_{\eta} \int_0^1 F\big( \dot{\eta}(t) \big) \,dt, \]
where the infimum is taken over all $\cC^1$-curves $\eta:[0,1] \lra M$
such that $\eta(0)=x$ and $\eta(1)=y$.
We remark that our distance can be \emph{nonsymmetric} (namely $d(y,x) \neq d(x,y)$)
since $F$ is only positively homogeneous.
A $\cC^{\infty}$-curve $\eta$ on $M$ is called a \emph{geodesic}
if it is locally minimizing and has a constant speed
(i.e., $F(\dot{\eta})$ is constant).
See \eqref{eq:geod} below for the precise geodesic equation.
Given $v \in T_xM$, if there is a geodesic $\eta:[0,1] \lra M$
with $\dot{\eta}(0)=v$, then we define the \emph{exponential map}
by $\exp_x(v):=\eta(1)$.
We say that $(M,F)$ is \emph{forward complete} if the exponential
map is defined on whole $TM$.
Then by the Hopf-Rinow theorem any pair of points
is connected by a minimal geodesic (cf.\ \cite[Theorem~6.6.1]{BCS}).

For each $v \in T_xM \setminus 0$, the positive-definite matrix
$(g_{ij}(v))_{i,j=1}^n$ in \eqref{eq:gij} induces
the Riemannian structure $g_v$ of $T_xM$ as
\begin{equation}\label{eq:gv}
g_v\bigg( \sum_{i=1}^n a_i \frac{\del}{\del x^i}\Big|_x,
 \sum_{j=1}^n b_j \frac{\del}{\del x^j}\Big|_x \bigg)
 := \sum_{i,j=1}^n a_i b_j g_{ij}(v).
\end{equation}
This inner product is regarded as the best Riemannian approximation of $F|_{T_xM}$ in the direction $v$,
and plays a vital role in the Riemannian geometric approach to Finsler geometry.
A  geometric way of introducing $g_v$ is that the unit sphere of $g_v$ is tangent to that of $F|_{T_xM}$
at $v/F(v)$ up to the second order.
In particular, we have $g_v(v,v)=F(v)^2$.

For later convenience, we recall a useful fact on homogeneous functions.

\begin{theorem}{\rm (cf.\ \cite[Theorem~1.2.1]{BCS})}\label{th:Euler}
Suppose that a differentiable function $H:\R^n \setminus \{0\} \lra \R$ satisfies
$H(cv)=c^r H(v)$ for some $r \in \R$ and all $c>0$ and $v \in \R^n \setminus \{0\}$
$($that is, $H$ is \emph{positively $r$-homogeneous}$)$.
Then we have
\[ \sum_{i=1}^n \frac{\del H}{\del v^i}(v)v^i=rH(v) \qquad
 \text{for all}\ v \in \R^n \setminus \{0\}. \]
\end{theorem}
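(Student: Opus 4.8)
The plan is to reduce the $n$-variable identity to differentiation along a single ray through the origin. Fix $v \in \R^n \setminus \{0\}$ and introduce the one-variable auxiliary function $\varphi:(0,\infty) \lra \R$ given by $\varphi(c):=H(cv)$. The positive $r$-homogeneity hypothesis says precisely $\varphi(c)=c^r H(v)$, so $\varphi$ is differentiable on $(0,\infty)$ with $\varphi'(c)=r c^{r-1} H(v)$.

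On the other hand, I would compute $\varphi'$ by the chain rule. The map $c \mapsto cv$ is smooth with velocity $v$, and $H$ is differentiable on $\R^n \setminus \{0\}$, hence
\[ \varphi'(c)=\sum_{i=1}^n \frac{\del H}{\del v^i}(cv)\,v^i \qquad \text{for all}\ c>0. \]
Equating the two expressions for $\varphi'(c)$ and evaluating at $c=1$ gives $\sum_{i=1}^n (\del H/\del v^i)(v)\,v^i = rH(v)$, which is the assertion. Since $v$ was arbitrary in $\R^n \setminus \{0\}$, this completes the argument.

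There is essentially no obstacle to overcome: the only mild point to note is that $H$ is differentiable only on the punctured space $\R^n \setminus \{0\}$, but this is harmless because for $v \neq 0$ the entire ray $\{cv : c>0\}$ lies in $\R^n \setminus \{0\}$, so both the composition $\varphi$ and the chain rule are valid on all of $(0,\infty)$.
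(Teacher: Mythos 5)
Your proof is correct and is the standard argument for Euler's theorem on homogeneous functions; the paper itself gives no proof but simply cites \cite[Theorem~1.2.1]{BCS}, which uses the same differentiate-along-the-ray-and-set-$c=1$ device. Nothing to add.
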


The \emph{Cartan tensor}
\[ A_{ijk}(v):=\frac{F(v)}{2} \frac{\del g_{ij}}{\del v^k}(v)
 \qquad \text{for}\ v \in TM \setminus 0 \]
is a quantity appearing only in the Finsler context.
Indeed, $A_{ijk}$ vanishes everywhere on $TM \setminus 0$
if and only if $F$ comes from a Riemannian metric.
As $g_{ij}$ is positively $0$-homogeneous on each $T_xM \setminus 0$,
Theorem~\ref{th:Euler} yields
\begin{equation}\label{eq:Av}
\sum_{i=1}^n A_{ijk}(v)v^i =\sum_{j=1}^n A_{ijk}(v)v^j
 =\sum_{k=1}^n A_{ijk}(v)v^k =0
\end{equation}
for all $v \in TM \setminus 0$ and $i,j,k=1,2,\ldots,n$.
Define the \emph{formal Christoffel symbol}
\[ \gamma^i_{jk}(v):=\frac{1}{2}\sum_{l=1}^n g^{il}(v) \bigg\{
 \frac{\del g_{jl}}{\del x^k}(v) +\frac{\del g_{lk}}{\del x^j}(v)
 -\frac{\del g_{jk}}{\del x^l}(v) \bigg\} \quad \text{for}\ v \in TM \setminus 0, \]
where $(g^{ij}(v))$ stands for the inverse matrix of $(g_{ij}(v))$.
We also introduce the \emph{geodesic spray coefficient}
and the \emph{nonlinear connection}
\[ G^i(v):=\sum_{j,k=1}^n \gamma^i_{jk}(v) v^j v^k, \quad
 N^i_j(v):=\frac{1}{2} \frac{\del G^i}{\del v^j}(v)
 \qquad \text{for}\ v \in TM \setminus 0, \]
and $G^i(0)=N^i_j(0):=0$ by convention.
Note that $G^i$ is positively $2$-homogeneous, so that
Theorem~\ref{th:Euler} implies $\sum_{j=1}^n N^i_j(v) v^j=G^i(v)$.
Following another representation of $N^i_j$ (with the help of Theorem~\ref{th:Euler}) will be used:
\begin{equation}\label{eq:Nij}
N^i_j(v) =\sum_{k=1}^n \gamma^i_{jk}(v)v^k
 -\frac{1}{F(v)}\sum_{k,l,m=1}^n A^i_{jk}(v)\gamma^k_{lm}(v) v^l v^m,
\end{equation}
where $A^i_{jk}:=\sum_{l=1}^n g^{il}A_{ljk}$.

By using $N^i_j$, the coefficients of the \emph{Chern connection} are given by
\begin{equation}\label{eq:Gamma}
\Gamma^i_{jk}:=\gamma^i_{jk}
 -\sum_{l,m=1}^n \frac{g^{il}}{F}(A_{jlm}N^m_k +A_{lkm}N^m_j -A_{jkm}N^m_l)
 \quad \text{on}\ TM \setminus 0.
\end{equation}
That is, the corresponding \emph{covariant derivative} of a vector field
$X=\sum_{i=1}^n X^i (\del/\del x^i)$ by $v \in T_xM$
with \emph{reference vector} $w \in T_xM \setminus 0$ is defined as
\begin{equation}\label{eq:covd}
D_v^w X(x):=\sum_{i,j=1}^n \bigg\{ v^j \frac{\del X^i}{\del x^j}(x)
 +\sum_{k=1}^n \Gamma^i_{jk}(w) v^j X^k(x) \bigg\} \frac{\del}{\del x^i}\Big|_x \in T_xM.
\end{equation}
Then the \emph{geodesic equation} is written as, with the help of \eqref{eq:Av},
\begin{equation}\label{eq:geod}
D_{\dot{\eta}}^{\dot{\eta}} \dot{\eta}(t)
 =\sum_{i=1}^n \big\{ \ddot{\eta}^i(t) +G^i \big( \dot{\eta}(t) \big) \big\}
 \frac{\del}{\del x^i} \Big|_{\eta(t)} =0.
\end{equation}
The following fact will be used in Section~\ref{sc:Bsplit}, we give a proof for completeness.

\begin{lemma}\label{lm:key}
If all integral curves of a non-vanishing $\cC^{\infty}$-vector field $V$
are geodesic, then we have
\[ D^V_V W=D^{g_V}_V W, \qquad D^V_W V=D^{g_V}_W V \]
for any differentiable vector field $W$, where $D^{g_V}$ stands for the covariant derivative
with respect to the Riemannian structure $g_V$ given as \eqref{eq:gv}.
\end{lemma}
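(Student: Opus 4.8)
The plan is to argue pointwise. Fix $x \in M$, choose a local coordinate system around $x$, and compare the Chern connection coefficients $\Gamma^i_{jk}(V)$ from \eqref{eq:Gamma} with the Christoffel symbols $\wt{\Gamma}^i_{jk}$ of the Riemannian metric $g_V$, that is, of $\hat{g}_{ij}(x):=g_{ij}(V(x))$. (All the relevant objects --- $g_V$, $A_{ijk}(V)$, $\Gamma^i_{jk}(V)$, $N^i_j(V)$ --- are well defined precisely because $V$ is non-vanishing.) The point will be that the difference $\wt{\Gamma}^i_{jk}-\Gamma^i_{jk}(V)$, although generally nonzero, is annihilated upon contracting with $V$ in whichever slot is contracted in each of the two asserted identities.

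First I would expand $\wt{\Gamma}^i_{jk}$ by the chain rule. Since $\del\hat{g}_{jl}/\del x^k=(\del g_{jl}/\del x^k)(V)+\sum_m (\del g_{jl}/\del v^m)(V)\,(\del V^m/\del x^k)$ and $\del g_{jl}/\del v^m=(2/F)A_{jlm}$, the ``horizontal'' part of $\wt{\Gamma}^i_{jk}$ reproduces $\gamma^i_{jk}(V)$, and after collecting the three Cartan terms and subtracting \eqref{eq:Gamma} one obtains
\[ \wt{\Gamma}^i_{jk}-\Gamma^i_{jk}(V) =\frac{1}{F}\sum_{l,m} g^{il}\Bigl( A_{jlm}[\del_k V^m+N^m_k] +A_{lkm}[\del_j V^m+N^m_j] -A_{jkm}[\del_l V^m+N^m_l] \Bigr), \]
where every function on the right-hand side is evaluated at $V=V(x)$ and $\del_k:=\del/\del x^k$.

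The second step is to contract this difference with $V$ and invoke \eqref{eq:Av}. For $D^V_V W=D^{g_V}_V W$ we contract in the index $j$: the terms carrying $A_{jlm}$ and $A_{jkm}$ vanish because $\sum_j A_{jlm}(V)V^j=0$, and what remains is $F^{-1}\sum_{l,m} g^{il}A_{lkm}\bigl(\sum_j(\del_j V^m)V^j+\sum_j N^m_j V^j\bigr)$. For $D^V_W V=D^{g_V}_W V$ we contract in the index $k$ (legitimate since $\Gamma$ and $\wt{\Gamma}$ are symmetric in the lower pair): now the $A_{lkm}$ and $A_{jkm}$ terms drop and there remains $F^{-1}\sum_{l,m}g^{il}A_{jlm}\bigl(\sum_k(\del_k V^m)V^k+\sum_k N^m_k V^k\bigr)$.

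In both cases the surviving factor is $\sum_j (\del_j V^m)(x)\,V^j(x)+\sum_j N^m_j(V(x))\,V^j(x)$, and this is where the geodesic hypothesis enters. The integral curve $\eta$ of $V$ through $x$ is a geodesic, so \eqref{eq:geod} gives $\ddot{\eta}^m+G^m(\dot{\eta})=0$; since $\dot{\eta}^j=V^j(\eta)$ forces $\ddot{\eta}^m=\sum_j(\del_j V^m)V^j$ along $\eta$, we get $\sum_j(\del_j V^m)(x)V^j(x)=-G^m(V(x))$. On the other hand $\sum_j N^m_j(V)V^j=G^m(V)$ by Euler's identity (Theorem~\ref{th:Euler}, as recorded right after the definition of $N^i_j$). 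The two cancel, so the contracted difference of the connection coefficients vanishes and both equalities follow. The only thing demanding care is the bookkeeping of which Cartan index is summed against $V$ and the symmetrization constants in the first step; no essential difficulty arises beyond that.
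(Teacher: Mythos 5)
Your proposal is correct and follows essentially the same route as the paper's proof: expand the Christoffel symbols of $\hat g_{ij}=g_{ij}(V)$ by the chain rule, compare with \eqref{eq:Gamma}, kill the unwanted Cartan terms via \eqref{eq:Av}, and use the geodesic equation together with $\sum_j N^m_j(V)V^j=G^m(V)$ to cancel what remains. The only cosmetic difference is that you organize the computation as a single difference $\wt{\Gamma}^i_{jk}-\Gamma^i_{jk}(V)$ and contract in each slot separately (noting correctly that symmetry makes the two contractions agree), whereas the paper contracts with $W^jV^k$ once and exhibits the two sides side by side.
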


\begin{proof}
To see the claim, it suffices to compare
$\sum_{j,k=1}^n \Gamma^i_{jk}(V) W^j V^k$ with the corresponding quantity for $g_V$.
On the one hand, we observe from \eqref{eq:Gamma} and \eqref{eq:Av} that
\[ \sum_{j,k=1}^n \Gamma^i_{jk}(V) W^j V^k
 = \sum_{j=1}^n \bigg\{ \sum_{k=1}^n \gamma^i_{jk}(V) V^k
 -\sum_{l,m=1}^n \frac{g^{il}(V)}{F(V)}A_{jlm}(V) G^m(V) \bigg\} W^j. \]
On the other hand, since
\[ \frac{\del[g_{jl}(V)]}{\del x^k}= \frac{\del g_{jl}}{\del x^k}(V)
 +\sum_{m=1}^n \frac{2}{F(V)} A_{jlm}(V) \frac{\del V^m}{\del x^k}, \]
the corresponding quantity for $g_V$ is
\[ \sum_{j,k=1}^n \bigg\{ \gamma^i_{jk}(V)
 +\sum_{l,m=1}^n \frac{g^{il}(V)}{F(V)}A_{jlm}(V) \frac{\del V^m}{\del x^k} \bigg\} W^j V^k. \]
Then the geodesic equation
\[ \sum_{k=1}^n \frac{\del V^m}{\del x^k} V^k+G^m(V)=0 \]
shows that they coincide.
$\qedd$
\end{proof}

\subsection{Weighted Ricci curvature}

The \emph{Ricci curvature} (as the trace of the \emph{flag curvature}) for a Finsler manifold
is defined by using the Chern connection.
Instead of giving the precise definition in coordinates,
we explain an elegant interpretation due to Shen (\cite[\S 6.2]{Shlec}, \cite[Lemma~2.4]{Shcdv}).

Given a unit vector $v \in U_xM$, we extend it to a $\cC^{\infty}$-vector field $V$
on a neighborhood of $x$ in such a way that every integral curve of $V$ is geodesic,
and consider the Riemannian structure $g_V$ induced from \eqref{eq:gv}.
Then the flag curvature $\bK(v,w)$ for $w \in T_xM$ linearly independent with $v$
coincides with the sectional curvature of the plane spanned by $v$ and $w$
with respect to $g_V$ (in particular, it is independent of the choice of $V$).
Similarly, the Ricci curvature $\Ric(v)$ of $v$ with respect to $F$ coincides with
the Ricci curvature of $v$ with respect to $g_V$.

Inspired by the above interpretation of the Ricci curvature
and the theory of weighted Riemannian manifolds,
the weighted Ricci curvature for the triple $(M,F,m)$ was introduced in \cite{Oint} as follows.

\begin{definition}[Weighted Ricci curvature]\label{df:wRic}
We first define the function $\Psi:UM \lra \R$ on the unit tangent sphere bundle
via the decomposition $m=e^{-\Psi(\dot{\eta})}\vol_{\dot{\eta}}$
along unit speed geodesics $\eta$,
where $\vol_{\dot{\eta}}$ denotes the Riemannian volume measure of $g_{\dot{\eta}}$.
Then, given a unit vector $v \in U_xM$ and the geodesic $\eta:(-\ve,\ve) \lra M$
such that $\dot{\eta}(0)=v$, we define the \emph{weighted Ricci curvature}
involving a parameter $N \in [n,\infty]$ by
\begin{enumerate}[(1)]
\item $\Ric_n(v):=\displaystyle
 \begin{cases} \Ric(v)+(\Psi \circ \dot{\eta})''(0) &\ \text{if}\ (\Psi \circ \dot{\eta})'(0)=0, \\
 -\infty &\ \text{if}\ (\Psi \circ \dot{\eta})'(0) \neq 0, \end{cases}$

\item $\Ric_N(v):=\Ric(v) +(\Psi \circ \dot{\eta})''(0) -\displaystyle\frac{(\Psi \circ \dot{\eta})'(0)^2}{N-n}\ $
for $N \in (n,\infty)$,

\item $\Ric_{\infty}(v):=\Ric(v) +(\Psi \circ \dot{\eta})''(0)$.
\end{enumerate}
We also set $\Ric_N(cv):=c^2 \Ric_N(v)$ for $c \ge 0$.
\end{definition}

\begin{remark}\label{rm:Psi}
Let us add comments to the above concise definition of $\Psi$.
Fix $v \in U_xM$ and extend it to a $\cC^{\infty}$-vector field $V$ on a neighborhood $\Omega$ of $x$
such that all integral curves of $V$ are geodesic.
We can decompose our base measure $m$ as $m=e^{-\psi}\vol_V$ on $\Omega$
by using a function $\psi$ on $\Omega$.
Then, since $V(\eta(t))=\dot{\eta}(t)$ along the geodesic $\eta$ with $\dot{\eta}(0)=v$,
$\psi \circ \eta$ depends only on $v$ (independent of the choice of $V$).
Thus $\Psi(v):=\psi(x)$ is well-defined.
\end{remark}

We will say that $\Ric_N \ge K$ holds for some $K \in \R$
if $\Ric_N(v) \ge KF(v)^2$ for all $v \in TM$.
We remark that $(\Psi \circ \dot{\eta})'(0)$ coincides with Shen's \emph{$\bS$-curvature}
$\bS(v)$ (see \cite[\S 7.3]{Shlec}).
Observe that $\Ric_N(v) \le \Ric_{N'}(v)$ holds for $N<N'$.
It was shown in \cite[Theorem~1.2]{Oint} that, for each $K \in \R$,
$\Ric_N \ge K$ is equivalent to Lott, Sturm and Villani's \emph{curvature-dimension condition} $\CD(K,N)$.
This equivalence extends the corresponding result on (weighted) Riemannian manifolds
(due to \cite{vRS}, \cite{Stcon}, \cite{StI}, \cite{StII}, \cite{LV1}, \cite{LV2}),
and has many analytic and geometric applications (see \cite{Oint}).

\begin{remark}\label{rm:wRic}
For a Riemannian manifold $(M,g,\vol_g)$ endowed with the Riemannian volume measure,
clearly we have $\Psi \equiv 0$ and hence $\Ric_N =\Ric$ for all $N \in [n,\infty]$.
In general, however, a Finsler manifold may not admit any measure $m$ satisfying $\bS \equiv 0$
(in other words, $\Ric_n>-\infty$), see \cite{ORand} for such an example.
This means that there is no nice reference measure in general, so that we began with an arbitrary measure.
\end{remark}

For later convenience, we introduce the following notations.

\begin{definition}[Reverse Finsler structure]\label{df:rev}
Define the \emph{reverse Finsler structure} $\rev{F}$ of $F$ by $\rev{F}(v):=F(-v)$.
We will put arrows $\leftarrow$ on those quantities associated with $\rev{F}$,
for example, $\rev{d}\!(x,y)=d(y,x)$, $\rev{\Nabla}u=-\Nabla(-u)$
and $\rev{\Ric}_N(v)=\Ric_N(-v)$.
\end{definition}

Note that $\Ric_N \ge K$ is equivalent to $\rev{\Ric}_N(v) \ge K\rev{F}(v)^2$,
so that the Ricci curvature bound is equivalent between $F$ and $\rev{F}$.
We say that $(M,F)$ is \emph{backward complete} if $(M,\rev{F})$ is forward complete.
The forward and backward completenesses are not mutually equivalent in general.

\subsection{Nonlinear Laplacian}

Let us denote by $\cL^*:T^*M \lra TM$ the \emph{Legendre transform}
associated with $F$ and its dual norm $F^*$ on $T^*M$.
Precisely, $\cL^*$ is sending $\alpha \in T_x^*M$ to the unique element $v \in T_xM$
such that $\alpha(v)=F^*(\alpha)^2$ and $F(v)=F^*(\alpha)$.
Note that $\cL^*|_{T^*_xM}$ becomes a linear operator only when $F|_{T_xM}$
is an inner product.
For a differentiable function $u:M \lra \R$, the \emph{gradient vector}
of $u$ at $x$ is defined as the Legendre transform of the derivative,
$\Nabla u(x):=\cL^*(Du(x)) \in T_xM$.
For a differentiable vector field $V$ on $M$ and $x \in M$ such that $V(x) \neq 0$,
we define $\Nabla V(x) \in T_x^*M \otimes T_xM$ by using the covariant derivative \eqref{eq:covd} as
\[ \Nabla V(v):=D^V_v V \in T_xM \qquad \text{for}\ v \in T_xM. \]
We also set $\Nabla^2 u(x):=\Nabla(\Nabla u)(x)$
for a twice differentiable function $u:M \lra \R$ and $x \in M$ such that $Du(x) \neq 0$.

Define the \emph{divergence} of a differentiable vector field $V=\sum_{i=1}^n V^i (\del/\del x^i)$
on $M$ with respect to the base measure $m$ by
\[ \div_m V:=\sum_{i=1}^n \bigg( \frac{\del V^i}{\del x^i} +V^i \frac{\del \Phi}{\del x^i} \bigg), \]
where we decomposed $m$ in coordinates as $dm=e^{\Phi} \,dx^1 dx^2 \cdots dx^n$.
The divergence can be rewritten (and extended to weakly differentiable vector fields) in the weak form as
\[ \int_M \phi \div_m V \,dm =-\int_M D\phi(V) \,dm
 \qquad \text{for all}\ \phi \in \cC_c^{\infty}(M). \]
Then we define the distributional \emph{Laplacian} of $u \in H^1_{\loc}(M)$ by
$\Lap u:=\div_m(\Nabla u)$ in the weak sense that
\[ \int_M \phi\Lap u \,dm:=-\int_M D\phi(\Nabla u) \,dm
 \qquad \text{for all}\ \phi \in \cC_c^{\infty}(M). \]
We remark that $H^1_{\loc}(M)$ is defined solely in terms of the differentiable structure of $M$.
As the Legendre transform is nonlinear, this Laplacian is a nonlinear operator
unless $F$ comes from a Riemannian metric.

The weighted Ricci curvature $\Ric_N$ works quite well with the nonlinear Laplacian.
Among others, we recall the Laplacian comparison theorem (\cite[Theorem~5.2]{OShf})
in the special case of nonnegative curvature, as well as
the Bochner-Weitzenb\"ock formula (\cite[Theorems~3.3, 3.6]{OSbw}).

\begin{theorem}[Laplacian comparison theorem]\label{th:Lcomp}
Let $(M,F)$ be forward or backward complete,
and assume that $\Ric_N \ge 0$ for some $N \in [n,\infty)$.
Then, for any $z \in M$, the function $u(x)=d(z,x)$ satisfies
\[ \Lap u(x) \le \frac{N-1}{d(z,x)} \]
point-wise on $M \setminus (\{z\} \cup \Cut_z)$, and in the distributional sense
on $M \setminus \{z\}$.
\end{theorem}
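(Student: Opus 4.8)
The plan is to follow the classical Riemannian proof of the Laplacian comparison along a minimal geodesic, but carried out with respect to the best Riemannian approximation $g_{\dot\eta}$ along the geodesic, and then to bring in the weighted term through the function $\Psi$. First I would fix $z \in M$, fix $x \in M \setminus (\{z\} \cup \Cut_z)$, and let $\eta:[0,r] \lra M$ be the unique unit-speed minimal geodesic from $z$ to $x$, so $r = d(z,x)$ and $u = d(z,\cdot)$ is $\cC^\infty$ near $x$ with $\Nabla u(\eta(t)) = \dot\eta(t)$. The key point (from the Shen interpretation recalled just before Definition~\ref{df:wRic}) is that if $V$ is a geodesic extension of $\dot\eta$, then $\Lap u(x) = \div_m(\Nabla u)(x)$ decomposes, via $m = e^{-\Psi(\dot\eta)}\vol_{\dot\eta}$ along $\eta$, into the Riemannian Laplacian of $u$ with respect to $g_V$ plus the derivative term $-(\Psi\circ\dot\eta)'(r)$ coming from the density. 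So the task reduces to: (i) bound the $g_V$-Laplacian of $u$ at $x$, and (ii) control $-(\Psi\circ\dot\eta)'(r)$.

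For step (i), along $\eta$ the function $u$ satisfies the Riccati/matrix-Riccati comparison with respect to $g_V$: writing $\varphi(t) = \Delta^{g_V} u(\eta(t))$ for the unweighted Laplacian, one has $\varphi'(t) \le -\Ric^{g_V}(\dot\eta)(t) - \varphi(t)^2/(n-1)$ by Cauchy--Schwarz on the Hessian (trace-free plus the $\dot\eta\dot\eta$ direction, on which $\Hess u$ vanishes since $|\nabla u|\equiv 1$), together with $\varphi(t) \sim (n-1)/t$ as $t \to 0^+$. For step (ii), I would combine this with the weighted term exactly as in the definition of $\Ric_N$: set $\psi(t) = (\Psi\circ\dot\eta)(t)$ and $h(t) = \varphi(t) - \psi'(t)$, which is the value of $\Lap u$ along $\eta$. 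Using $\Ric^{g_V}(\dot\eta) = \Ric(\dot\eta)$ and $\Ric_N(\dot\eta) = \Ric(\dot\eta) + \psi'' - \psi'^2/(N-n) \ge 0$, the ODE for $h$ becomes $h' \le -\Ric_N(\dot\eta) - \varphi^2/(n-1) - \psi'^2/(N-n) \le -\varphi^2/(n-1) - \psi'^2/(N-n)$, and a standard quadratic estimate gives $\varphi^2/(n-1) + \psi'^2/(N-n) \ge (\varphi - \psi')^2/(N-1) = h^2/(N-1)$. Hence $h' \le -h^2/(N-1)$ with $h(t) \sim (n-1)/t$ near $0$, and comparing with the solution $(N-1)/t$ of the corresponding equality yields $h(r) \le (N-1)/r$, i.e.\ the pointwise bound $\Lap u(x) \le (N-1)/d(z,x)$.

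The passage to the distributional statement on $M \setminus \{z\}$ is then the routine-but-technical part: since $u$ is locally Lipschitz, $\Nabla u$ is defined a.e., and on the cut locus $\Cut_z$ (measure zero) the distance function fails to be smooth only in the ``concave'' direction, so for any nonnegative $\phi \in \cC_c^\infty(M \setminus \{z\})$ one approximates, e.g.\ replacing $u$ by $u_\ve = d(z,\cdot) - \ve$ or by localizing away from $\Cut_z$ and using that barriers from above propagate to the distributional inequality (the standard Calabi trick: near a cut point take a slightly shifted basepoint $z'$ on the geodesic to get a smooth upper barrier with the same bound up to $O(\ve)$). One must also invoke the forward (or backward) completeness to guarantee the minimal geodesic exists; in the backward-complete case one runs the same argument for $\rev F$ and $\rev d$, using $\rev\Ric_N \ge 0 \iff \Ric_N \ge 0$ and that $\Lap$ with respect to $m$ transforms appropriately.

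I expect the main obstacle to be step (ii)'s bookkeeping in the Finsler setting — specifically, verifying cleanly that the density contribution to $\div_m(\Nabla u)$ along $\eta$ is exactly $-(\Psi\circ\dot\eta)'$ with the same sign conventions as in Definition~\ref{df:wRic}, and that $\Ric^{g_V}(\dot\eta)$ really equals the Finsler $\Ric(\dot\eta)$ pointwise along the whole geodesic rather than just at one point (this is where Lemma~\ref{lm:key} and Shen's interpretation do the work). Once that identification is in place, the ODE comparison is formally identical to the weighted Riemannian case, and the distributional upgrade is standard; but since the reference is to \cite[Theorem~5.2]{OShf}, I would mostly cite that and only indicate the nonnegative-curvature specialization.
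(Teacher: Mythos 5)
The paper does not prove this statement; it recalls Theorem~\ref{th:Lcomp} as the $K=0$ case of \cite[Theorem~5.2]{OShf}. Your sketch correctly outlines the standard weighted Laplacian-comparison argument underlying that reference---reduce to the Riemannian Laplacian of $g_{\Nabla u}$ along the minimal geodesic, run the Riccati comparison, absorb the $(\Psi\circ\dot\eta)'$-term into the $\Ric_N$ bound via the quadratic Cauchy--Schwarz inequality, and use Calabi's trick for the distributional upgrade---and the identifications you flag as needing care (that the $g_V$-Ricci curvature of $\dot\eta$ equals $\Ric(\dot\eta)$ along all of $\eta$, and that the density contribution to $\div_m(\Nabla u)$ along $\eta$ is exactly $-(\Psi\circ\dot\eta)'$) are precisely what Shen's interpretation and Lemma~\ref{lm:key} supply.
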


We denoted by $\Cut_z$ the \emph{cut locus} of $z$.
The cut locus is the set of \emph{cut points} $x=\exp_z(v)$ such that
$\eta(t)=\exp_z(tv)$ is minimal on $[0,1]$ but not minimal on $[0,1+\ve]$ for any $\ve>0$.

\begin{theorem}[Bochner-Weitzenb\"ock formula]\label{th:BW}
Given $u \in H^2_{\loc}(M) \cap \cC^1(M)$ with $\Lap u \in H^1_{\loc}(M)$,
we have
\[ \Delta^{\Nabla u} \bigg( \frac{F(\Nabla u)^2}{2} \bigg) -D(\Lap u)(\Nabla u)
 =\Ric_{\infty}(\Nabla u) +\| \Nabla^2 u \|_{HS(\Nabla u)}^2 \]
as well as
\[ \Delta^{\Nabla u} \bigg( \frac{F(\Nabla u)^2}{2} \bigg) -D(\Lap u)(\Nabla u)
 \ge \Ric_N(\Nabla u) +\frac{(\Lap u)^2}{N} \]
for $N \in [n,\infty]$ point-wise on $M \setminus \{x \in M\,|\, \Nabla u(x)=0\}$,
and in the weak sense on $M$.
Here $\Delta^{\Nabla u}:=\div_m \circ \nabla^{g_{\Nabla u}}$ is the linearized Laplacian
associated with $g_{\Nabla u}$, and $\|\cdot\|_{HS(\Nabla u)}$ stands for
the Hilbert-Schmidt norm with respect to $g_{\Nabla u}$.
\end{theorem}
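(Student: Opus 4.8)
The plan is to transfer the formula to a genuine weighted Riemannian manifold by freezing the fundamental tensor along $\Nabla u$. First I would restrict attention to the open set $M_u:=\{x\in M:\Nabla u(x)\neq 0\}$ and prove both assertions point-wise there; the weak statements on all of $M$ should then follow from the standard truncation argument (replace $F(\Nabla u)^2/2$ by $\max\{F(\Nabla u)^2/2,\ve\}$, use that $F(\Nabla u)$, $\Lap u$ and $\Nabla^2 u$ vanish almost everywhere on $M\setminus M_u$, and let $\ve\downarrow 0$; under the curvature hypothesis the signs of the Hilbert-Schmidt term and of the Ricci term push the limiting inequality the right way). On $M_u$ I would put $\hat g:=g_{\Nabla u}$, a genuine Riemannian metric there. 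The defining property of the Legendre transform yields $g_{\Nabla u}(\Nabla u,\cdot)=Du(\cdot)$, that is $\Nabla u=\nabla^{\hat g}u$, so that $\Lap u=\div_m(\Nabla u)=\div_m(\nabla^{\hat g}u)=\Delta^{\Nabla u}u$. Hence the left-hand side of the asserted identity is exactly the left-hand side of the weighted (Bakry-\'Emery) Bochner formula of $(M_u,\hat g,m)$: writing $m=e^{-\psi}\vol_{\hat g}$ locally, the latter reads
\[ \Delta^{\Nabla u}\bigg(\frac{F(\Nabla u)^2}{2}\bigg)-D(\Lap u)(\Nabla u)
 = \|\Hess^{\hat g}u\|_{HS(\hat g)}^2 + (\Ric^{\hat g}+\Hess^{\hat g}\psi)(\Nabla u) \]
point-wise on $M_u$.

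The remaining task --- and the hard part --- is to identify, at an arbitrary $x_0\in M_u$, the right-hand side above with $\Ric_\infty(\Nabla u)+\|\Nabla^2 u\|_{HS(\Nabla u)}^2$. The difficulty is that the field $\Nabla u$ has non-geodesic integral curves in general, so Shen's identification of the Finsler $\Ric$ with a sectional curvature of $g_V$ (and Lemma~\ref{lm:key}) cannot be applied with $V=\Nabla u$. My plan is to bring in a second reference field: let $V$ be the $\cC^\infty$-extension of $v_0:=\Nabla u(x_0)$ all of whose integral curves are geodesic --- the field implicit in the definitions of $\Ric$ and of $\Psi$. By Shen's interpretation together with Definition~\ref{df:wRic}, the Bakry-\'Emery Ricci tensor of $(M_u,g_V,m)$ in the direction $v_0$ equals $\Ric_\infty(\Nabla u(x_0))$. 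Since $V(x_0)=\Nabla u(x_0)$, the tensors $\hat g$ and $g_V$ agree at $x_0$, so the discrepancies between the two osculating pictures at $x_0$ --- namely $\Hess^{\hat g}u(x_0)-\Nabla^2 u(x_0)$, the difference of the two Ricci tensors, and the difference of $\Hess^{\hat g}\psi$ and $\Hess^{g_V}\psi_V$ (where $m=e^{-\psi_V}\vol_{g_V}$) --- are all governed by explicit Cartan-tensor contractions of the first derivatives of $V-\Nabla u$ at $x_0$. The plan is then to use \eqref{eq:Gamma}, \eqref{eq:Nij} and, repeatedly, the Euler relations \eqref{eq:Av} --- which annihilate every term in which the Cartan tensor is contracted against $\Nabla u$; for instance one computes that $\Hess^{\hat g}u-\Nabla^2 u$ is a Cartan-tensor contraction involving $\Nabla^2 u(\Nabla u)$, while $\Nabla^2 u(\Nabla u)=\Hess^{\hat g}u(\Nabla u)$ --- to check that all of these correction terms cancel against the difference $\|\Hess^{\hat g}u\|_{HS(\hat g)}^2-\|\Nabla^2 u\|_{HS(\Nabla u)}^2$, which yields the $N=\infty$ identity. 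This cancellation bookkeeping is where essentially all of the work lies, and it is precisely the reason $\Nabla^2 u$ and $\Ric_N$ must be formed with the reference vectors prescribed above.

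Granting the $N=\infty$ identity, the bound for finite $N\in[n,\infty)$ should follow by a purely algebraic manipulation, exactly as in the weighted Bakry-\'Emery calculus. One has $\|\Nabla^2 u\|_{HS(\Nabla u)}^2\ge(\tr_{\Nabla u}\Nabla^2 u)^2/n$ by Cauchy-Schwarz, while the relation between the nonlinear Laplacian and the osculating Hessian from \cite{OShf} gives $\tr_{\Nabla u}\Nabla^2 u=\Lap u+\bS(\Nabla u)$, where the $\bS$-curvature satisfies $\bS(\Nabla u)=F(\Nabla u)\,(\Psi\circ\dot\eta)'(0)$ for the unit-speed geodesic $\eta$ in the direction $\Nabla u$; moreover $\Ric_\infty(\Nabla u)=\Ric_N(\Nabla u)+F(\Nabla u)^2(\Psi\circ\dot\eta)'(0)^2/(N-n)$ by Definition~\ref{df:wRic}. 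Completing the square via the elementary inequality $\tfrac{(a+b)^2}{n}+\tfrac{b^2}{N-n}\ge\tfrac{a^2}{N}$ for $N>n$ (equivalently $[(N-n)a+Nb]^2\ge 0$), applied with $a=\Lap u$ and $b=F(\Nabla u)(\Psi\circ\dot\eta)'(0)$, then turns the $N=\infty$ identity into
\[ \Delta^{\Nabla u}\bigg(\frac{F(\Nabla u)^2}{2}\bigg)-D(\Lap u)(\Nabla u)\ge\Ric_N(\Nabla u)+\frac{(\Lap u)^2}{N}; \]
for $N=n$ one lets $N\downarrow n$, and in the branch $(\Psi\circ\dot\eta)'(0)\neq 0$ the right-hand side is $-\infty$ so the inequality is trivial. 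Finally the passage back to the weak statement on all of $M$ is the truncation argument indicated at the outset.
$\qedd$
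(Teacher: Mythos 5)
This theorem is not proved in the paper at hand: it is imported verbatim from \cite[Theorems~3.3, 3.6]{OSbw}, so there is no in-text proof to compare your proposal against. That said, a few remarks on the proposal on its own merits.

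Your identification of the left-hand side with the left-hand side of the weighted Riemannian Bochner formula for $\hat g=g_{\Nabla u}$ is correct and is exactly the right starting point; so is the reduction from the $N=\infty$ identity to the $N\in[n,\infty)$ inequality by Cauchy--Schwarz, the relation $\tr_{\Nabla u}\Nabla^2 u=\Lap u+\bS(\Nabla u)$, and the elementary completion of the square; this last part is standard Bakry--\'Emery algebra and is fine.

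The difficulty is precisely where you flag it, and there the sketch has a genuine gap. You assert that the discrepancies between the $\hat g$-picture and the $g_V$-picture at $x_0$ are ``all governed by explicit Cartan-tensor contractions of the first derivatives of $V-\Nabla u$.'' That is true for the Hessian term: the proof of Lemma~\ref{lm:key} shows that $\Gamma^i_{jk}(\Nabla u)$ and the Christoffel symbols of $\hat g$ differ by Cartan-tensor terms in $\del(\Nabla u)^m/\del x^k$, and in particular $\Nabla^2 u(\Nabla u)=\Hess^{\hat g}u(\Nabla u)$ because every correction term contracts $A$ against $\Nabla u$. But it fails for the curvature and weight terms: $\Ric^{\hat g}$ depends on $\hat g$ up to second order, hence on $\Nabla u$ up to second order, hence on $u$ up to \emph{third} order; and $\psi$ (from $m=e^{-\psi}\vol_{\hat g}$) already depends on first derivatives of $u$ through $\det\hat g$, so $\Hess^{\hat g}\psi(\Nabla u,\Nabla u)$ also carries third derivatives of $u$. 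On the Finsler side $\Ric(\Nabla u)$ and $(\Psi\circ\dot\eta)''(0)$ are determined by $F$, $m$ and the single vector $\Nabla u(x_0)$; they contain no derivatives of $u$ at all. So the third-derivative contributions on the $\hat g$ side must cancel \emph{between} $\Ric^{\hat g}$ and $\Hess^{\hat g}\psi$, and the residual second-derivative contributions must then be absorbed into $\|\Hess^{\hat g}u\|^2_{HS}-\|\Nabla^2 u\|^2_{HS(\Nabla u)}$. None of this is shown; it is deferred to ``cancellation bookkeeping,'' which is the entire content of the theorem, and the statement that only first derivatives of $V-\Nabla u$ enter is simply false for the Ricci and $\psi$ terms. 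As the authors themselves note in the proof of Theorem~\ref{th:para}, the $\hat g$-Bochner formula and the Finsler Bochner formula have \emph{termwise} identical right-hand sides only when the integral curves of $\Nabla u$ are geodesic (cf.\ \cite[Remark~3.4]{OSbw}); in general the two right-hand sides are equal only in total, and demonstrating that equality requires the full coordinate computation carried out in \cite{OSbw}, not a soft Cartan-tensor argument. Your proposal also leaves the truncation from $M_u$ to $M$ in the weak form unverified (the object to truncate is the test function, not $F(\Nabla u)^2/2$), though this is a comparatively minor issue.
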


To be precise, in the definition of $\Delta^{\Nabla u}$,
we replace $\Nabla u$ with a measurable, non-vanishing vector field $V$
such that $V(x)=\Nabla u(x)$ if $\Nabla u(x) \neq 0$.
We remark that $\Delta^{\Nabla u} u=\Lap u$ holds (\cite[Lemma~2.4]{OShf}).

\subsection{Berwald spaces}

We introduce an important and reasonable class of Finsler manifolds.

\begin{definition}[Berwald spaces]\label{df:Btype}
We say that a Finsler manifold $(M,F)$ is of \emph{Berwald type} (or a \emph{Berwald space})
if $\Gamma_{jk}^i$ is constant on $T_xM \setminus 0$ for every $x \in M$.
\end{definition}

Clearly Riemannian manifolds and (smooth) Minkowski normed spaces are of Berwald type.
Non-Riemannian, non-flat Berwald spaces can be easily constructed by taking
various kinds of products of Berwald spaces (cf.\ \emph{Descartes products} in \cite[\S 2]{Sz1}).
Berwald spaces enjoy several fine properties (see \cite[Chapter~10]{BCS}),
we recall two of them for later use (cf.\ \cite[Proposition~10.1.1, Theorem~10.6.2]{BCS}).

\begin{proposition}[Isometry of tangent spaces, \cite{Ic}]\label{pr:Ich}
Let $(M,F)$ be a Finsler manifold of Berwald type.
Then, for any $\cC^1$-curve $\eta:[0,1] \lra M$ with $\dot{\eta} \neq 0$,
the parallel transport along $\eta$ is a linear isometry between
$(T_{\eta(0)}M,F|_{T_{\eta(0)}M})$ and $(T_{\eta(1)}M,F|_{T_{\eta(1)}M})$.
\end{proposition}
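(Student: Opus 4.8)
The plan is to use the Berwald condition twice: once to make the parallel transport a bona fide linear isomorphism, and once to force it to preserve the Minkowski norms. Since, by Definition~\ref{df:Btype}, $\Gamma^i_{jk}$ is independent of the reference vector, write $\Gamma^i_{jk}(x)$ for its common value on $T_xM \setminus 0$; then the parallel transport of a vector along a $\cC^1$-curve $\eta:[0,1]\to M$ with $\dot{\eta}\neq 0$ is governed by the \emph{linear} ODE
\[ \dot{W}^i(t) + \sum_{j,k=1}^n \Gamma^i_{jk}\big(\eta(t)\big)\,\dot{\eta}^j(t)\,W^k(t) = 0, \qquad i=1,\dots,n, \]
which admits a unique solution for each initial value $W(0)=v$, so that $P_\eta:v \mapsto W(1)$ is a well-defined linear isomorphism $T_{\eta(0)}M \to T_{\eta(1)}M$ (and $W(t)\neq 0$ whenever $v\neq 0$, by uniqueness). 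I would also record the identity $\sum_{j}\Gamma^i_{jk}(v)\,v^j = N^i_k(v)$, which follows from \eqref{eq:Gamma}, \eqref{eq:Av} and \eqref{eq:Nij}; in the Berwald case it upgrades to $\sum_{j}\Gamma^i_{jk}(x)\,w^j = N^i_k(w)$ for \emph{every} $w \in T_xM\setminus 0$.

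To prove $F(P_\eta v)=F(v)$, fix $v\in T_{\eta(0)}M\setminus 0$, let $W$ be the parallel field with $W(0)=v$, and show that $h(t):=F(W(t))^2 = \sum_{i,j=1}^n g_{ij}(W(t))\,W^i(t)\,W^j(t)$ is constant (using $g_v(v,v)=F(v)^2$). Differentiating in $t$ and invoking the chain rule, the parallel transport ODE above, the relation $\del g_{ij}/\del v^k = (2/F)A_{ijk}$, and the almost $g$-compatibility of the Chern connection (which can be extracted from \eqref{eq:Gamma} by a computation in the spirit of the proof of Lemma~\ref{lm:key}), I expect the purely Finslerian terms to collect into combinations of the shape $\sum_{i,j}A_{ijk}(W)\,W^iW^j$, which vanish by \eqref{eq:Av}, while the remaining terms, after substituting $\sum_j\Gamma^i_{jk}(\eta)\,W^j=N^i_k(W)$, pair off and cancel. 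This yields $h'\equiv 0$, hence $F(W(1))=F(W(0))$; combined with linearity of $P_\eta$, this shows $P_\eta$ is a linear isometry between $(T_{\eta(0)}M,F|_{T_{\eta(0)}M})$ and $(T_{\eta(1)}M,F|_{T_{\eta(1)}M})$.

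The main obstacle should be the bookkeeping in that single differentiation — specifically, verifying that the Cartan-tensor and non-metricity contributions genuinely cancel — and this is exactly the place where the Berwald hypothesis is essential: only the reference-vector independence of $\Gamma^i_{jk}$ lets one evaluate it at the transported vector $W(t)$ (rather than at $\dot{\eta}$), which is what makes $N^i_k(W)$ appear symmetrically in both $\tfrac{d}{dt}g_{ij}(W)$ and $\tfrac{d}{dt}(W^iW^j)$ and so produces the cancellation. For a general Finsler metric this identity fails, consistent with the fact that parallel transport on a genuinely Finsler manifold need not be a norm isometry.
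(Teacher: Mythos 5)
The paper does not prove Proposition~\ref{pr:Ich} at all: it is stated as a cited result (to Ichijy\=o, and cf.\ \cite[Proposition~10.1.1]{BCS}), and the surrounding text only remarks that in Berwald spaces the covariant derivative \eqref{eq:covd} is reference-vector independent, so parallel transport is unambiguous. Thus there is no in-paper route to compare against; I can only assess your argument on its own. It is correct, and in fact the same computation resurfaces later in the paper, in disguised form, inside the proof of Proposition~\ref{pr:Bsplit}(i), where the author shows $\frac{\del}{\del t}[F(V)^2]=\frac{\del}{\del t}[g_V(V,V)]=2g_V(D_{\dot\sigma}V,V)$ along integral curves of $\Nabla\bb_\eta$ and cites \cite[Exercises~10.1.1, 10.1.2]{BCS} for the first equality; that exercise is precisely the cancellation you need to carry out.

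Because that cancellation is the crux of your proof and you describe it only as ``the remaining terms pair off and cancel,'' it is worth spelling out to be sure nothing is lost. Write $h(t)=\sum_{i,j}g_{ij}(W)W^iW^j$, so that
\[
h'(t)=\sum_{i,j,k}\frac{\del g_{ij}}{\del x^k}(W)\dot\eta^k W^iW^j
      +\sum_{i,j,k}\frac{\del g_{ij}}{\del v^k}(W)\dot W^k W^iW^j
      +2\sum_{i,j}g_{ij}(W)\dot W^i W^j .
\]
The middle sum vanishes immediately by $\del g_{ij}/\del v^k=(2/F)A_{ijk}$ and \eqref{eq:Av}. For the first sum, use the elementary identity
$\del_k g_{ij}=\sum_l\bigl(g_{il}\gamma^l_{kj}+g_{jl}\gamma^l_{ki}\bigr)$
and contract with $W^iW^j$; by \eqref{eq:Nij}, $\sum_i\gamma^l_{ki}(W)W^i=N^l_k(W)+F(W)^{-1}\sum_m A^l_{km}(W)G^m(W)$, and the Cartan piece again dies against $W^j$ by \eqref{eq:Av}, leaving
$2\sum_{j,k,l}g_{jl}(W)W^j N^l_k(W)\dot\eta^k$.
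For the last sum, substitute the transport ODE, use the Berwald hypothesis to replace $\Gamma^i_{ab}(\eta(t))$ by $\Gamma^i_{ab}(W(t))$, and invoke your identity $\sum_b\Gamma^i_{ab}(W)W^b=N^i_a(W)$; this gives
$-2\sum_{j,k,l}g_{jl}(W)W^j N^l_k(W)\dot\eta^k$,
which cancels the first. This is the only place the Berwald hypothesis is used, exactly as you say, and without it the substitution $\Gamma^i_{ab}(\eta)\to\Gamma^i_{ab}(W)$ is illegitimate. So the plan is sound; I would just replace the ``pair off and cancel'' sentence with the three displayed computations above, since that is where \eqref{eq:Av}, \eqref{eq:Nij}, and the Berwald condition all have to act at once, and it is not obvious in advance that they will.
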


We remark that, in Berwald spaces, the covariant derivative~\eqref{eq:covd}
is independent of the choice of a reference vector.
Thus the parallel transport is unambiguously defined.

\begin{theorem}[Szab\'o's rigidity, \cite{Sz1}]\label{th:Szabo}
Let $(M,F)$ be a connected Berwald surface.
Then the following dichotomy holds.
\begin{itemize}
\item If the flag curvature is identically $0$, then $F$ is locally Minkowskian everywhere.
\item If the flag curvature is not identically $0$, then $F$ is Riemannian everywhere.
\end{itemize}
\end{theorem}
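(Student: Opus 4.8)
The plan is to exploit the Berwald property together with the two-dimensionality to pin down the structure of the Cartan tensor. First I would recall that on a surface, for $v \in T_xM \setminus 0$, the Cartan tensor $A_{ijk}(v)$ is essentially determined by a single scalar: by \eqref{eq:Av} it annihilates $v$ in each index, so $A_{ijk}(v)$ lives in the one-dimensional orthogonal complement (with respect to $g_v$) of $v$ inside $T_xM$, and hence, picking a $g_v$-unit vector $e(v) \perp_v v$, we may write $A_{ijk}(v) = I(v)\, e_i(v) e_j(v) e_k(v)$ where $I(v)$ is the \emph{main scalar} (Cartan scalar). The goal becomes showing $I \equiv 0$ (forcing $F$ to be Riemannian by the characterization of the Cartan tensor recalled after \eqref{eq:Av}) or else a curvature condition.

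The heart of the argument is a differential identity relating the flag curvature $\bK$, the main scalar $I$, and its derivatives along the Berwald connection. Concretely, I would use that in a Berwald space the connection coefficients $\Gamma^i_{jk}$ do not depend on the reference vector, so parallel transport is an honest linear isometry of the tangent Minkowski norms (Proposition~\ref{pr:Ich}); in particular the main scalar $I$, viewed as a function on the (two-dimensional) indicatrix bundle $UM$, is \emph{parallel} along horizontal directions — its horizontal covariant derivative vanishes. Then I invoke the classical Berwald surface structure equations (the analogue of Gauss--Codazzi in the Finsler two-dimensional frame bundle, e.g.\ \cite[Chapter~3]{BCS}), which in the Berwald case reduce to: the horizontal derivative of $I$ is zero, and a second structure equation of the schematic form $\bK \cdot (\text{something involving } I) = (\text{second angular derivative of } I)$. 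Feeding horizontal-parallelism of $I$ into these equations yields that either $I$ is identically zero on all of $UM$ — i.e.\ $F$ is Riemannian everywhere — or $I$ is nowhere zero and the curvature term is forced to vanish, from which one deduces $\bK \equiv 0$ and then that $F$ is locally Minkowskian (the connection is flat and the norm is parallel, so around each point one can straighten coordinates to make $F$ independent of position).

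The main obstacle I expect is making the dichotomy genuinely global and clean: the structure equations give a pointwise trichotomy (at a given $x$, $I(x,\cdot)$ either vanishes identically on $U_xM$ or is nowhere zero there), and one must upgrade this to the stated global statement, ruling out the possibility that $F$ is Riemannian on one open piece of $M$ and strictly Berwald on another. Here connectedness of $M$ is essential: I would argue that the set where $I \equiv 0$ on the fibre is both open and closed. Closedness is clear by continuity; openness requires showing that if $I$ vanishes on one fibre it vanishes on a neighbourhood, which follows because horizontal-parallelism of $I$ (from the Berwald condition) propagates the value of $I$ along curves — a vanishing fibre forces vanishing on all nearby fibres. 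Once the Riemannian locus is seen to be open, closed, and nonempty in the first case, it is all of $M$; in the complementary case $I$ is nowhere zero, the curvature identity forces $\bK \equiv 0$, and local Minkowskianness follows. I would also need to double-check the borderline regularity/degeneracy issues inherent to working on $UM$ near where the chosen frame $e(v)$ is ill-defined, but on a surface this is harmless since $e(v)$ is determined up to sign and $I^2$, $A_{ijk}$ are globally smooth.
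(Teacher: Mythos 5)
The paper does not prove Theorem~\ref{th:Szabo}; it is cited from Szab\'o \cite{Sz1} (see also \cite[Theorem~10.6.2]{BCS}), so there is no internal argument to compare against. The standard proof---Szab\'o's own, and the one in the cited reference---is via holonomy: in a Berwald space the connection coefficients $\Gamma^i_{jk}$ are $v$-independent, so the Chern connection is an honest affine connection on $M$, and by Ichijy\=o's theorem (Proposition~\ref{pr:Ich}) parallel transport is a linear isometry between the tangent Minkowski norms. Hence the restricted holonomy group at a point $x$ is a compact connected subgroup of $\mathrm{GL}(T_xM) \cong \mathrm{GL}(2,\R)$. If it is trivial, the connection is flat, the norm becomes position-independent in a parallel frame, $F$ is locally Minkowskian, and the flag curvature vanishes identically. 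If it is nontrivial, it is conjugate to $\mathrm{SO}(2)$; a strongly convex Minkowski norm on $\R^2$ invariant under all rotations has an ellipse as its indicatrix, hence comes from an inner product, so $F|_{T_xM}$ and therefore (by parallel transport) $F$ is Riemannian everywhere. Your route via the main scalar $I$ and the two-dimensional structure equations is a genuinely different approach from this.

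There is, however, a real gap in it. You assert a fibrewise dichotomy, that $I(x,\cdot)$ either vanishes identically on $U_xM$ or is nowhere zero there, and then argue ``$I$ nowhere zero forces the curvature term to vanish.'' This dichotomy is false. For a non-Riemannian Minkowski norm on $\R^2$ the main scalar generically vanishes at finitely many directions of the indicatrix (for instance at axes of symmetry of the norm) without vanishing identically, so at such a point neither branch of your dichotomy applies. What the structure equations actually yield is weaker: the Berwald hypothesis kills the Landsberg scalar $J$, whence the first Bianchi identity says only that $I$ is annihilated by the geodesic spray on $UM$, and the second reduces to a first-order linear ODE for the flag curvature $\bK$ along each fiber $U_xM$ with coefficient $I$. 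Integrating this around the fiber circle shows that either $\bK \equiv 0$ on that fiber or $\oint_{U_xM} I = 0$; but the condition $\oint I = 0$ is perfectly compatible with $I \not\equiv 0$, so the pointwise conclusion ``Riemannian or flat'' does not drop out at this step. To complete a proof along your lines one would need a substantially sharper analysis of that ODE together with the global Berwald constraint; the holonomy argument sidesteps these quantitative subtleties and delivers the global dichotomy directly, which is presumably why it is the argument used in \cite{Sz1} and \cite{BCS}.
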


Szab\'o also classified higher dimensional non-Riemannian Berwald metrics
by means of holonomy theory (\cite{Sz1}, \cite{Sz2}),
whereas such a classification is not really helpful to our purpose.
We also remark that the \emph{Busemann-Hausdorff measure} satisfies $\bS \equiv 0$
for Berwald spaces (cf.\ \cite[\S 7.3]{Shlec}), though this fact will not be used.

\section{Analysis of Busemann functions}\label{sc:Buse}

Let $(M,F)$ be forward complete in this section.
We begin the study of the splitting phenomenon with analyzing Busemann functions.

We call a geodesic $\eta:[0,\infty) \lra M$ a \emph{ray} if it is globally minimizing
and has the unit speed (for brevity), i.e., $d(\eta(s),\eta(t))=t-s$ for all $s<t$.
Given a ray $\eta$, the associated \emph{Busemann function} $\bb_{\eta}:M \lra \R$ is defined by
\[ \bb_{\eta}(x):=\lim_{t \to \infty}\big\{ t-d\big( x,\eta(t) \big) \big\}. \]
This limit indeed exists because the triangle inequality ensures, for any $s<t$,
\[ s-d\big( x,\eta(s) \big) \le s-\big\{ d\big( x,\eta(t) \big) -(t-s) \big\}
 =t-d\big( x,\eta(t) \big) \le d\big( \eta(0),x \big). \]
The triangle inequality also shows that $\bb_{\eta}$ is $1$-Lipschitz in the sense that
\begin{equation}\label{eq:1-Lip}
\bb_{\eta}(y)-\bb_{\eta}(x) \le d(x,y) \qquad \text{for all}\ x,y \in M,
\end{equation}
and hence $\bb_{\eta}$ is differentiable almost everywhere.

We say that another ray $\sigma:[0,\infty) \lra M$ is
\emph{asymptotic} to $\eta$, denoted by $\sigma \sim \eta$,
if there are sequences $\{t_i\}_{i \in \N} \subset [0,\infty)$ and $\{\sigma_i\}_{i \in \N}$
such that $\lim_{i \to \infty}t_i=\infty$, $\sigma_i:[0,d(\sigma(0),\eta(t_i))] \lra M$
is a minimal geodesic from $\sigma(0)$ to $\eta(t_i)$, and that
$\lim_{i \to \infty} \sigma_i(t)=\sigma(t)$ for all $t \ge 0$.
The next lemma is concerned with the fundamental properties of Busemann functions
(cf.\ \cite[Theorem~3.8.2]{SST}).
We give proofs for completeness as our distance is nonsymmetric.

\begin{lemma}\label{lm:asymp}
Let $\eta:[0,\infty) \lra M$ be a ray.
\begin{enumerate}[{\rm (i)}]
\item For any $x \in M$, there exists a ray $\sigma$ asymptotic to $\eta$
such that $\sigma(0)=x$.

\item For any ray $\sigma \sim \eta$ and $s \ge 0$, it holds
$\bb_{\eta}(\sigma(s))=\bb_{\eta}(\sigma(0))+s$.

\item If $\bb_{\eta}$ is differentiable at $x \in M$, then
$\sigma(s)=\exp_x(s\Nabla \bb_{\eta}(x))$ is a unique ray asymptotic to $\eta$
emanating from $x$.
\end{enumerate}
\end{lemma}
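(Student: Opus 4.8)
The plan is to establish the three parts in order, following the Riemannian template (cf.\ \cite[Theorem~3.8.2]{SST}) while being careful about the nonsymmetry of $d$ — in particular, always keeping the moving point in the first slot of $d$. For (i): by forward completeness and the Hopf--Rinow theorem, for each $t>0$ there is a unit-speed minimal geodesic $\sigma_t\colon[0,d(x,\eta(t))]\lra M$ from $x$ to $\eta(t)$. Since $d(x,\eta(t))\ge d(\eta(0),\eta(t))-d(\eta(0),x)=t-d(\eta(0),x)\to\infty$, and $\dot\sigma_t(0)$ lies in the compact set $U_xM$, we may choose $t_i\to\infty$ with $\dot\sigma_{t_i}(0)\to v\in U_xM$ and set $\sigma(s):=\exp_x(sv)$. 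For each fixed $s$ the curves $\sigma_{t_i}$ are eventually defined at $s$, and continuity of $\exp$ gives $\sigma_{t_i}(s)\to\sigma(s)$; since $\sigma_{t_i}|_{[0,s]}$ is minimal, $d(x,\sigma_{t_i}(s))=s$, so continuity of $d$ yields $d(x,\sigma(s))=s$, whence $d(\sigma(s'),\sigma(s))=s-s'$ for $0\le s'\le s$ by the triangle inequality. Thus $\sigma$ is a ray, asymptotic to $\eta$ with witnesses $\{t_i\}$, $\{\sigma_{t_i}\}$.

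For (ii): the bound $\bb_\eta(\sigma(s))-\bb_\eta(\sigma(0))\le d(\sigma(0),\sigma(s))=s$ is exactly \eqref{eq:1-Lip}. Conversely, let $\{t_i\}$, $\{\sigma_i\}$ witness $\sigma\sim\eta$. For $i$ large enough that $d(\sigma(0),\eta(t_i))\ge s$, minimality of $\sigma_i$ splits the distance as $d(\sigma_i(s),\eta(t_i))=d(\sigma(0),\eta(t_i))-s$, so the triangle inequality gives
\[
t_i-d\big(\sigma(s),\eta(t_i)\big)\ \ge\ t_i-d\big(\sigma(s),\sigma_i(s)\big)-d\big(\sigma_i(s),\eta(t_i)\big)\ =\ t_i-d\big(\sigma(s),\sigma_i(s)\big)-d\big(\sigma(0),\eta(t_i)\big)+s.
\]
Letting $i\to\infty$, using $d(\sigma(s),\sigma_i(s))\to 0$ (continuity of $d$ together with $\sigma_i(s)\to\sigma(s)$) and the fact that the Busemann limit exists (so it equals the limit along the subsequence $t_i$), we obtain $\bb_\eta(\sigma(s))\ge\bb_\eta(\sigma(0))+s$, and equality follows.

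For (iii): part (i) provides a ray $\sigma$ asymptotic to $\eta$ with $\sigma(0)=x$, and part (ii) gives $\bb_\eta(\sigma(s))=\bb_\eta(x)+s$; differentiating at $s=0$ — legitimate since $\bb_\eta$ is differentiable at $x$ and $\sigma$ is smooth — yields $D\bb_\eta(x)(\dot\sigma(0))=1$. On the other hand, comparing $\bb_\eta$ at $x$ with its value at $\exp_x(tw)$ via \eqref{eq:1-Lip} gives $D\bb_\eta(x)(w)\le F(w)$ for every $w\in T_xM$, i.e.\ $F^*(D\bb_\eta(x))\le 1$; combined with $F(\dot\sigma(0))=1$ this shows $\dot\sigma(0)$ attains the supremum defining $F^*(D\bb_\eta(x))$. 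By strong convexity of $F$ this maximizer is unique and equals $\cL^*(D\bb_\eta(x))=\Nabla\bb_\eta(x)$, so $\sigma(s)=\exp_x(s\Nabla\bb_\eta(x))$. The same computation applies verbatim to any asymptotic ray emanating from $x$, forcing it to have the same initial velocity, which gives uniqueness.

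None of this is deep. The points requiring attention are the systematic bookkeeping imposed by the nonsymmetry of $d$ and the invocations of continuity of the (forward) distance function, continuity of the exponential map, and compactness of $U_xM$ — all available in the present setting. The one place where the Finsler structure genuinely enters is the final step of (iii): in the Riemannian case equality in the Cauchy--Schwarz inequality is automatic, whereas here one must use strong convexity of $F$ to identify the unit vector along which $\bb_\eta$ grows at unit rate with the Legendre image of $D\bb_\eta(x)$, namely $\Nabla\bb_\eta(x)$. This is the step I would state most carefully.
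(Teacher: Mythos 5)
Your proof is correct and follows essentially the same route as the paper: compactness of $U_xM$ plus forward completeness for (i), the definition of asymptoticity together with minimality of the $\sigma_i$ to split the distance for (ii), and the $1$-Lipschitz bound for (iii). The only presentational difference is in (iii), where the paper's argument is stated in one sentence and you spell out the Legendre-transform step ($D\bb_\eta(x)(\dot\sigma(0))=1$, $F^*(D\bb_\eta(x))\le 1$, hence $\dot\sigma(0)=\cL^*(D\bb_\eta(x))=\Nabla\bb_\eta(x)$ by strong convexity); this is exactly the content the paper leaves implicit.
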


\begin{proof}
(i) As $(M,F)$ is forward complete, we can
choose a unit speed minimal geodesic $\sigma_i$ from $x$ to $\eta(i)$ for each $i \in \N$.
By extracting a subsequence denoted again by $\{\sigma_i\}_{i \in \N}$,
the initial tangent vector $\dot{\sigma}_i(0)$ converges to some unit vector $v \in U_xM$.
Then the ray $\sigma(s):=\exp_x(sv)$ is asymptotic to $\eta$ by construction.

(ii) Take $\{t_i\}_{i \in \N}$ and $\{\sigma_i\}_{i \in \N}$ as in the definition
of the asymptoticity.
It holds that
\[ \bb_{\eta}\big( \sigma(s) \big)
 = \lim_{i \to \infty} \big\{ t_i-d\big( \sigma(s),\eta(t_i) \big) \big\} \]
by the definition of $\bb_{\eta}$.
We can replace $\sigma(s)$ in the right hand side with $\sigma_i(s)$ since
\[ \big| d\big( \sigma(s),\eta(t_i) \big)-d\big( \sigma_i(s),\eta(t_i) \big) \big|
 \le \max \big\{ d\big( \sigma(s),\sigma_i(s) \big),d\big( \sigma_i(s),\sigma(s) \big) \big\}
 \to 0 \]
as $i \to \infty$.
Hence we have, by the choice of $\sigma_i$,
\begin{align*}
\bb_{\eta}\big( \sigma(s) \big)
&= \lim_{i \to \infty} \big\{ t_i-d\big( \sigma_i(s),\eta(t_i) \big) \big\}
 =\lim_{i \to \infty} \big\{ t_i-d\big( \sigma_i(0),\eta(t_i) \big) +s \big\} \\
&= \lim_{i \to \infty} \big\{ t_i-d\big( \sigma(0),\eta(t_i) \big) +s \big\}
 =\bb_{\eta}\big( \sigma(0) \big) +s.
\end{align*}

(iii) Recall from \eqref{eq:1-Lip} that $\bb_{\eta}$ is $1$-Lipschitz.
Then we deduce from (ii) that any ray $\sigma \sim \eta$
with $\sigma(0)=x$ must satisfy $\dot{\sigma}(0)=\Nabla \bb_{\eta}(x)$.
This completes the proof.
$\qedd$
\end{proof}

The following is a key analytic property of Busemann functions.
The proof is similar to \cite[Lemma~5.6]{KS} (see also \cite{EH}, \cite[Lemma~2.1]{FLZ})
thanks to the Laplacian comparison theorem (Theorem~\ref{th:Lcomp}).

\begin{proposition}\label{pr:subh}
Assume that $\Ric_N \ge 0$ for some $N \in [n,\infty]$,
and that $\Psi:UM \lra \R$ as in Definition~$\ref{df:wRic}$ is bounded above if $N=\infty$.
Then $\bb_{\eta}$ is subharmonic, namely $\Lap \bb_{\eta} \ge 0$ holds
in the distributional sense.
\end{proposition}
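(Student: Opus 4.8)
The plan is to show $\Lap\bb_\eta\ge 0$ weakly by the standard "barrier" (support function) argument, transplanted to the Finsler setting via the Laplacian comparison theorem. Fix $x_0\in M$. By Lemma~\ref{lm:asymp}(i) choose a ray $\sigma\sim\eta$ with $\sigma(0)=x_0$, and for each $s>0$ define the \emph{lower barrier}
\[
 \bb_{\eta,s}(x):=s+\bb_\eta\big(\sigma(s)\big)-d\big(x,\sigma(s)\big)
 = s - d(x,\sigma(s)) + \bb_\eta(x_0),
\]
using Lemma~\ref{lm:asymp}(ii). The triangle inequality gives $\bb_{\eta,s}(x)\le\bb_\eta(x)$ for all $x$, and from the definition of the Busemann limit together with $\sigma\sim\eta$ one checks $\bb_{\eta,s}(x_0)\to\bb_\eta(x_0)$ as $s\to\infty$; in fact equality $\bb_{\eta,s}(x_0)=\bb_\eta(x_0)$ holds here, so $\bb_{\eta,s}$ touches $\bb_\eta$ from below at $x_0$. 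The key point is that $x\mapsto d(x,\sigma(s))$ is the distance function to the point $\sigma(s)$ for the \emph{reverse} structure $\rev F$ (since $\rev d(\sigma(s),x)=d(x,\sigma(s))$), so Theorem~\ref{th:Lcomp} applied to $(M,\rev F)$ — which is backward complete with $\rev\Ric_N\ge 0$ — yields $\rev\Lap\, d(\cdot,\sigma(s))\le (N-1)/d(\cdot,\sigma(s))$ distributionally on $M\setminus\{\sigma(s)\}$, hence
\[
 \Lap \bb_{\eta,s} \;\ge\; -\frac{N-1}{d(\cdot,\sigma(s))}
\]
in the distributional sense near $x_0$. As $s\to\infty$ the right-hand side tends to $0$ locally uniformly near $x_0$, since $d(x_0,\sigma(s))=s\to\infty$ and $x_0$ is at bounded distance from the moving point.

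To pass from these barriers to $\Lap\bb_\eta\ge 0$ I would use the nonlinear maximum-principle / comparison machinery for the Finsler Laplacian: $\bb_\eta$ is a viscosity (equivalently distributional, by the regularity theory for this quasilinear operator cited in \cite{OShf}) supersolution of $\Lap u\ge -\epsilon$ near any point once $s$ is large enough, because it admits at each point a $C^2$ lower support function $\bb_{\eta,s}$ (smooth away from cut loci; and one can perturb or use the standard trick of replacing $\sigma(s)$ by a nearby point to avoid $x_0\in\Cut$) satisfying $\Lap\bb_{\eta,s}\ge-\epsilon$. Letting $\epsilon\to 0$ gives subharmonicity. Concretely, for a test function $0\le\phi\in\cC^\infty_c(M)$ one estimates $\int_M D\phi(\Nabla\bb_\eta)\,dm$ from above by $\int_M D\phi(\Nabla\bb_{\eta,s})\,dm$ plus an error controlled by the fact that the barriers touch from below and by the monotonicity of the Legendre transform; then the distributional inequality for $\bb_{\eta,s}$ finishes the argument after $s\to\infty$.

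The role of the hypothesis "$\Psi$ bounded above when $N=\infty$" is to make the comparison theorem usable: when $N=\infty$ one has no clean Laplacian bound, so instead one fixes a finite $N'$ and notes that $\Ric_\infty\ge 0$ together with $\sup\Psi<\infty$ along geodesics forces $\Ric_{N'}\ge -C(N')$ for… — more precisely, with $\Psi$ bounded one does \emph{not} immediately get $\Ric_{N'}\ge0$, so the cleaner route is: the assumption $\sup\Psi<\infty$ is exactly what lets the Busemann-function argument of \cite{KS} go through, controlling the drift term $(\Psi\circ\dot\eta)'$ in the comparison estimate; for $N<\infty$ no such assumption is needed and Theorem~\ref{th:Lcomp} applies directly. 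I would handle the two cases in parallel, invoking Theorem~\ref{th:Lcomp} when $N<\infty$ and its $\infty$-analogue (Laplacian bound of the form $\Lap\, d \le (\sup\Psi' + \text{something})/d$, or simply $\le C$) under the boundedness assumption.

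\textbf{Main obstacle.} The genuinely delicate point is the regularity/technical step of legitimizing the barrier argument for the \emph{nonlinear} Laplacian $\Lap$: unlike the Riemannian case, one cannot simply add distributional inequalities, and one must be careful that a family of $C^2$ lower support functions with $\Lap\bb_{\eta,s}\ge-\epsilon$ indeed forces $\Lap\bb_\eta\ge 0$ in the weak sense. This is where I expect to lean hardest on the results and techniques of \cite{OShf} (comparison of distributional and pointwise-a.e. Laplacians, behaviour across cut loci, the calculus of $\Nabla$ and the Legendre transform), and where, following \cite[Lemma~5.6]{KS}, the nonsymmetry of $d$ must be tracked carefully since the barriers naturally live on the reverse structure $\rev F$.
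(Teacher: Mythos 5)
Your barrier setup identifies the right objects (the shifted distance functions $s - d(\cdot,\sigma(s)) + \text{const}$, the role of the reverse structure $\rev{F}$, the Laplacian comparison theorem), but the step you flag as the ``main obstacle'' --- getting from a family of $\cC^2$ lower support functions to weak subharmonicity of the nonlinear $\Lap$ --- is precisely where your argument stays open, and the touching-from-below property at a single point does \emph{not} by itself control $\int D\phi(\Nabla\bb_\eta)\,dm$ against $\int D\phi(\Nabla\bb_{\eta,s})\,dm$. The paper sidesteps this entirely: put $r_i(x) = -d(x,\eta(i))$; then Lemma~\ref{lm:asymp}(iii) (uniqueness of the asymptotic ray at differentiability points) gives $\Nabla r_i(x) \to \Nabla\bb_\eta(x)$ almost everywhere, and since the gradient is the only nonlinear ingredient in the weak formulation, dominated convergence yields $\int_\Omega D\phi(\Nabla r_i)\,dm \to \int_\Omega D\phi(\Nabla\bb_\eta)\,dm$ directly. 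Combined with $\int_\Omega \phi\,\rev{\Lap}(-r_i)\,dm \le (N-1)\int_\Omega \phi/(-r_i)\,dm \to 0$, this finishes the $N<\infty$ case with no viscosity/barrier machinery at all. In short, the key lemma that closes your gap is the a.e.\ gradient convergence, which you invoke implicitly through Lemma~\ref{lm:asymp} but never use.

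Your treatment of the $N=\infty$ case also goes wrong: a bound of the form $\Lap\, d \le C$ would leave you with $\int D\phi(\Nabla\bb_\eta)\,dm \le C\int\phi\,dm > 0$ in the limit, which is useless. What the boundedness of $\Psi$ actually buys is a pointwise estimate on $\rev{\Lap}(-r_i)(x)$ of the form $-\tfrac{1}{r_i(x)}\{(n-1) - 2\Psi(\dot\sigma_i(0)) + 2\sup_{UM}\Psi\}$ (computed via the Riemannian structure $g_{\Nabla r_i}$ along the geodesic to $\eta(i)$), i.e.\ an effective finite dimension $N_\Omega = n + 2(\sup_{UM}\Psi - \inf_{U\Omega}\Psi)$ on any bounded $\Omega$, so that the bound still decays like $1/(-r_i)$ and the same limit argument applies. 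The ``drift control'' picture you gesture at from \cite{KS} is the right intuition, but it must be converted into a $1/d$-type decay, not a constant bound.
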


\begin{proof}
We first treat the case of $N<\infty$.
Fix an arbitrary bounded open set $\Omega \subset M$ and a nonnegative test function
$\phi \in H^1_0(\Omega)$.
Put $r_i(x):=-d(x,\eta(i))$ for $i \in \N$.
Note that $r_i$ is differentiable almost everywhere and $\Nabla r_i(x)$ coincides with
the initial vector of the unique unit speed minimal geodesic from $x$ to $\eta(i)$.
Thanks to Lemma~\ref{lm:asymp}(iii) (and the construction in (i)),
we find $\lim_{i \to \infty} \Nabla r_i(x)=\Nabla\bb_{\eta}(x)$
for $x$ at where $\bb_{\eta}$ is differentiable.
Thus we have, by the dominated convergence theorem,
\[ \lim_{i \to \infty} \int_{\Omega} D\phi(\Nabla r_i) \,dm
 =\int_{\Omega} D\phi(\Nabla \bb_{\eta}) \,dm. \]
In order to apply Theorem~\ref{th:Lcomp}, we observe (recall Definition~\ref{df:rev})
\[ \Nabla r_i =-\rev{\Nabla}(-r_i)
 =-\rev{\Nabla} \left[ \rev{d}\!\big( \eta(i),\cdot \big) \right]. \]
Hence Theorem~\ref{th:Lcomp} for $\rev{F}$ yields
\begin{equation}\label{eq:Lcomp}
\int_{\Omega} D\phi(\Nabla \bb_{\eta}) \,dm
 =\lim_{i \to \infty} \int_{\Omega} \phi \rev{\Lap}(-r_i) \,dm
 \le (N-1) \lim_{i \to \infty} \int_{\Omega} \frac{\phi}{-r_i} \,dm=0.
\end{equation}

As for $N=\infty$, we derive from the calculation with respect to the Riemannian structure
$g_{\Nabla r_i}$ that (cf.\ \cite[(2.1)]{FLZ}),
since $\rev{g}_{\rev{\Nabla}(-r_i)} =g_{-\rev{\Nabla}(-r_i)} =g_{\Nabla r_i}$
and all integral curves of $\Nabla r_i$ are geodesic (with respect to $F$),
\begin{align*}
\rev{\Lap}(-r_i)(x)
&\le -\frac{n-1}{r_i(x)} +\frac{2\Psi(\dot{\sigma}_i(0))}{r_i(x)}
 +\frac{2}{r_i(x)^2}\int_0^{-r_i(x)} \Psi(\dot{\sigma}_i) \,ds \\
&\le -\frac{1}{r_i(x)} \left\{ (n-1)-2\Psi\big( \dot{\sigma}_i(0) \big) +2\sup_{UM} \Psi \right\}
\end{align*}
for $x \in M \setminus (\{\eta(i)\} \cup \rev{\Cut}_{\eta(i)})$,
where $\sigma_i:[0,-r_i(x)] \lra M$ is the unique minimal geodesic
from $x$ to $\eta(i)$ (with respect to $F$).
Therefore \eqref{eq:Lcomp} is available with
\[ N_{\Omega}=n+2 \Big( \sup_{UM} \Psi -\inf_{U\Omega}\Psi \Big) \]
in place of $N$, and $\bb_{\eta}$ is subharmonic.
$\qedd$
\end{proof}

\section{A diffeomorphic splitting}\label{sc:dsplit}

From here on, let $(M,F)$ be both forward and backward complete,
and assume that $\Ric_N \ge 0$ for some $N \in [n,\infty]$
and $\Psi$ (defined in Definition~\ref{df:wRic}) is bounded above if $N=\infty$.
Suppose that $(M,F)$ admits a \emph{straight line},
that is, a geodesic $\eta:\R \lra M$ with $d(\eta(s),\eta(t))=t-s$ for all $s<t$.
Let us consider the two Busemann functions
\[ \bb_{\eta}(x):=\lim_{t \to \infty}\big\{ t-d\big( x,\eta(t) \big) \big\}, \qquad
 \bb_{\bar{\eta}}(x):=\lim_{t \to \infty}\big\{ t-d\big( \eta(-t),x \big) \big\}, \]
where $\bb_{\bar{\eta}}$ is precisely the Busemann function for the ray $\bar{\eta}(t):=\eta(-t)$,
$t \in [0,\infty)$, with respect to $\rev{F}$.

\begin{proposition}\label{pr:harm}
Let $\eta:\R \lra M$ be a straight line.
Then we have $\bb_{\eta} +\bb_{\bar{\eta}} \equiv 0$,
and $\bb_{\eta}$ and $\bb_{\bar{\eta}}$ are harmonic with respect to
$F$ and $\rev{F}$, namely $\Lap\bb_{\eta} =\rev{\Lap} \bb_{\bar{\eta}} \equiv 0$.
In particular, $\bb_{\eta}$ and $\bb_{\bar{\eta}}$ are $\cC^{\infty}$ and
$\Lap\bb_{\eta} =\rev{\Lap} \bb_{\bar{\eta}} \equiv 0$ in fact holds in the point-wise sense.
\end{proposition}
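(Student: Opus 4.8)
The plan is to follow the classical Cheeger--Gromoll scheme, replacing the linear maximum principle by a nonlinear one. First I would record the pointwise relations $\bb_{\eta}+\bb_{\bar{\eta}}\le 0$ on $M$, with equality along $\eta$. For each $t>0$ the triangle inequality gives $d(\eta(-t),x)+d(x,\eta(t))\ge d(\eta(-t),\eta(t))=2t$, so $\{t-d(x,\eta(t))\}+\{t-d(\eta(-t),x)\}\le 0$, and letting $t\to\infty$ (both limits exist) yields $\bb_{\eta}(x)+\bb_{\bar{\eta}}(x)\le 0$; along the line a direct computation gives $\bb_{\eta}(\eta(s))=s$ and $\bb_{\bar{\eta}}(\eta(s))=-s$, hence $\bb_{\eta}+\bb_{\bar{\eta}}\equiv 0$ there. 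Next, Proposition~\ref{pr:subh} applied to the forward ray $\eta|_{[0,\infty)}$ shows $\bb_{\eta}$ is $F$-subharmonic, and the same proposition applied to $(M,\rev{F},m)$ and the $\rev{F}$-ray $\bar{\eta}$ (here $\rev{\Ric}_N=\Ric_N(-\,\cdot\,)\ge 0$ and $\rev{\Psi}(v)=\Psi(-v)$ is bounded above exactly when $\Psi$ is, using $\rev{g}_v=g_{-v}$ and that $\rev{F}$-geodesics are the reversals of $F$-geodesics) shows $\rev{\Lap}\bb_{\bar{\eta}}\ge 0$; since $\rev{\Lap}v=-\Lap(-v)$ by Definition~\ref{df:rev}, this says precisely that $-\bb_{\bar{\eta}}$ is $F$-superharmonic.

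The heart of the matter is to upgrade ``$\bb_{\eta}\le-\bb_{\bar{\eta}}$ with equality along $\eta$'' to ``$\bb_{\eta}+\bb_{\bar{\eta}}\equiv 0$''. Because $\Lap$ is nonlinear (and not odd), the classical step --- that $\bb_{\eta}+\bb_{\bar{\eta}}$ is subharmonic and attains its maximum $0$, hence is constant --- is unavailable. Instead I would run a strong maximum principle directly for the quasilinear operator: setting $w:=-\bb_{\bar{\eta}}-\bb_{\eta}\ge 0$ and subtracting the two weak (super/sub)harmonicity inequalities, $w$ becomes a nonnegative weak supersolution of the linear divergence-form equation $\div_m(\Theta\,Dw)=0$, where $\Theta:=\int_0^1 D\cL^*\big((1-t)D\bb_{\eta}+tD(-\bb_{\bar{\eta}})\big)\,dt$ has bounded measurable entries (both differentials lie in $\{F^*\le 1\}$). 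On a small ball around a point of $\eta$ the field $\Theta$ is uniformly elliptic: by Lemma~\ref{lm:asymp}(iii) the differentials $D\bb_{\eta}$ and $D(-\bb_{\bar{\eta}})$ have $F^*$-norm $1$ almost everywhere (their gradients are the unit velocities of asymptotic rays) and coincide along $\eta$, so the segment joining them stays away from $0$ near the line. The weak Harnack inequality then forces $w\equiv 0$ in a neighbourhood of $\eta$, and a continuation argument --- rerunning the same reasoning along the line obtained by concatenating forward and backward asymptotic rays through a point where $w=0$, whose Busemann functions agree with $\bb_{\eta},\bb_{\bar{\eta}}$ up to additive constants --- propagates $w\equiv 0$ over the connected manifold $M$. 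I expect this is where the real work lies; the main obstacle is controlling the ellipticity of $\Theta$ near the touching locus with only the a priori Lipschitz regularity of the Busemann functions at hand.

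Once $\bb_{\eta}+\bb_{\bar{\eta}}\equiv 0$, harmonicity is immediate: $\bb_{\eta}$ is $F$-subharmonic, while $\bb_{\bar{\eta}}=-\bb_{\eta}$ being $\rev{F}$-subharmonic means $0\le\rev{\Lap}(-\bb_{\eta})=-\Lap\bb_{\eta}$, so $\Lap\bb_{\eta}=0$ in the distributional sense, and symmetrically $\rev{\Lap}\bb_{\bar{\eta}}=0$. Finally, for the $\cC^{\infty}$ and pointwise assertions I would invoke the regularity theory for the nonlinear Laplacian (cf.\ \cite{OShf}): a weakly $F$-harmonic function is $\cC^{1,\alpha}_{\loc}$, hence differentiable everywhere, so by Lemma~\ref{lm:asymp}(iii) the curve $s\mapsto\exp_x(s\Nabla\bb_{\eta}(x))$ is a unit-speed ray and therefore $F(\Nabla\bb_{\eta})\equiv 1$; on the open set $\{\Nabla\bb_{\eta}\ne 0\}=M$ the equation $\Lap\bb_{\eta}=0$ is uniformly elliptic with smooth coefficients, so elliptic bootstrapping gives $\bb_{\eta}\in\cC^{\infty}(M)$ with $\Lap\bb_{\eta}\equiv 0$ pointwise, and likewise $\bb_{\bar{\eta}}\in\cC^{\infty}(M)$ with $\rev{\Lap}\bb_{\bar{\eta}}\equiv 0$ pointwise.
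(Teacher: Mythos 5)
Your proof follows the same overall scheme as the paper: the triangle inequality gives $\bb_{\eta}+\bb_{\bar{\eta}}\le 0$ with equality along $\eta$, Proposition~\ref{pr:subh} (applied to both $F$ and $\rev{F}$) gives $\Lap\bb_{\eta}\ge 0\ge \Lap(-\bb_{\bar{\eta}})$, a strong comparison principle upgrades the touching to identity and kills the Laplacians, and then the regularity theory for the nonlinear Laplacian ($\cC^{1,\alpha}$, non-vanishing gradient since $F(\Nabla\bb_{\eta})\equiv 1$, elliptic bootstrap) gives $\cC^{\infty}$. The one place you diverge is that you try to \emph{re-derive} the strong comparison principle from scratch---linearizing $\cL^*$ by the mean-value theorem to get a divergence-form equation $\div_m(\Theta\,Dw)\le 0$ for $w=-\bb_{\bar{\eta}}-\bb_{\eta}$ and invoking the weak Harnack inequality, then a continuation argument along bi-asymptotic lines---whereas the paper simply cites a ready-made nonlinear strong maximum principle (Damascelli \cite{Da}, Theorem~2-2; Ge--Shen \cite{GS}, Lemma~5.4) tailored to exactly this situation: a subsolution and a supersolution of the Finsler Laplacian that touch at a point. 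Your sketch is in fact the standard way such principles are proved, and you correctly identify the genuine difficulty, namely establishing uniform ellipticity of $\Theta$ near the touching set given only a.e.\ differentiability of the Busemann functions (the segment between the two gradients must be kept away from the origin, and they are only known to be $L^{\infty}$ sections of the unit cosphere, not continuous a priori). That control, together with the propagation across $M$, is precisely what the cited lemma packages. So there is no conceptual difference, but your version leaves open the technical heart that the paper dispatches by reference; to be self-contained you would need to actually carry out the ellipticity estimate and the continuation argument, or better, cite \cite{GS} as the paper does.
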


\begin{proof}
We immediately observe from the triangle inequality that $\bb_{\eta} +\bb_{\bar{\eta}} \le 0$.
Proposition~\ref{pr:subh} implies $\Lap\bb_{\eta} \ge 0$ as well as
$\rev{\Lap}\bb_{\bar{\eta}} \ge 0$ (note that $\rev{\Psi}(v)=\Psi(-v)$).
Therefore
\[ \Lap\bb_{\eta} \ge 0 \ge -\rev{\Lap} \bb_{\bar{\eta}} =\Lap(-\bb_{\bar{\eta}}), \]
while $\bb_{\eta} \circ \eta \equiv -\bb_{\bar{\eta}} \circ \eta$.
Hence the strong maximum principle (see \cite[Theorem~2-2]{Da}, \cite[Lemma~5.4]{GS})
yields $\bb_{\eta}=-\bb_{\bar{\eta}}$ and $\Lap\bb_{\eta} =\rev{\Lap} \bb_{\bar{\eta}} \equiv 0$.

As a harmonic function is a static solution to the heat equation,
$\bb_{\eta}$ is $\cC^{1,\alpha}$ by \cite[Theorem~4.9]{OShf} (see also \cite[Theorem~1.1]{GS}).
Furthermore, $\Nabla\bb_{\eta}$ does not vanish
since $F(\Nabla\bb_{\eta}) \equiv 1$ by Lemma~\ref{lm:asymp}(ii),
so that $\bb_{\eta}$ and $\bb_{\bar{\eta}}=-\bb_{\eta}$ are eventually $\cC^{\infty}$
(see \cite[Remark~4.10]{OShf}, \cite[Theorem~1.1]{GS}).
$\qedd$
\end{proof}

We say that a straight line $\sigma:\R \lra M$ is \emph{bi-asymptotic} to $\eta$
if $\sigma|_{[0,\infty)} \sim \eta|_{[0,\infty)}$ and if $\bar{\sigma}(s):=\sigma(-s)$
is asymptotic to $\bar{\eta}$ with respect to $\rev{F}$.
Combining Proposition~\ref{pr:harm} with Lemma~\ref{lm:asymp}(iii),
we observe the following.

\begin{lemma}\label{lm:biasym}
Let $\eta:\R \lra M$ be a straight line.
Then, for any $x \in M$, the geodesic $\sigma:\R \lra M$ with $\dot{\sigma}(0)=\Nabla\bb_{\eta}(x)$
is a unique straight line bi-asymptotic to $\eta$ such that $\sigma(0)=x$.
\end{lemma}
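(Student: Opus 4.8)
The plan is to build the bi-asymptotic straight line $\sigma$ through $x$ by flowing along the gradient vector field of the Busemann function, using Proposition~\ref{pr:harm} to guarantee the requisite regularity. Concretely, set $\sigma(s):=\exp_x\big(s\Nabla\bb_{\eta}(x)\big)$; this is a geodesic by definition of the exponential map, and it is defined for all $s\in\R$ because $(M,F)$ is both forward and backward complete. The first task is to show $\sigma$ is a unit-speed geodesic: since $\bb_{\eta}$ is $\cC^\infty$ with $F(\Nabla\bb_{\eta})\equiv 1$ (established in Proposition~\ref{pr:harm} via Lemma~\ref{lm:asymp}(ii)), the initial vector $\Nabla\bb_{\eta}(x)$ is a unit vector, hence $F(\dot\sigma)\equiv 1$.

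Next I would verify that $\sigma$ is in fact a straight line, i.e.\ globally minimizing in both directions. The key is that $\bb_{\eta}$ is $1$-Lipschitz (inequality \eqref{eq:1-Lip}) and that along $\sigma$ it increases at unit rate: I claim $\bb_{\eta}(\sigma(s))=\bb_{\eta}(x)+s$ for all $s\in\R$. For $s\ge 0$ this follows from Lemma~\ref{lm:asymp}(iii), which identifies $\sigma|_{[0,\infty)}$ as the ray asymptotic to $\eta$ emanating from $x$, together with Lemma~\ref{lm:asymp}(ii). For $s\le 0$, one uses $\bb_{\eta}=-\bb_{\bar\eta}$ from Proposition~\ref{pr:harm}: the curve $\bar\sigma(s)=\sigma(-s)$ has $\dot{\bar\sigma}(0)=-\Nabla\bb_{\eta}(x)=\rev{\Nabla}\bb_{\bar\eta}(x)$ (by Definition~\ref{df:rev}), so applying Lemma~\ref{lm:asymp}(iii) for the reverse structure $\rev{F}$ shows $\bar\sigma|_{[0,\infty)}$ is the $\rev{F}$-ray asymptotic to $\bar\eta$ from $x$, and $\bb_{\bar\eta}(\bar\sigma(s))=\bb_{\bar\eta}(x)+s$. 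Once $\bb_{\eta}\circ\sigma$ is affine with slope $1$ on all of $\R$, the $1$-Lipschitz bound forces, for any $s<t$,
\[ t-s = \bb_{\eta}(\sigma(t))-\bb_{\eta}(\sigma(s)) \le d\big(\sigma(s),\sigma(t)\big) \le t-s, \]
the last inequality because $\sigma$ has unit speed; hence equality holds throughout and $\sigma$ is a straight line. The computation in the previous paragraph simultaneously shows $\sigma|_{[0,\infty)}\sim\eta|_{[0,\infty)}$ and $\bar\sigma\sim\bar\eta$ with respect to $\rev{F}$, which is exactly bi-asymptoticity.

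Finally, uniqueness: if $\sigma'$ is another straight line bi-asymptotic to $\eta$ with $\sigma'(0)=x$, then in particular $\sigma'|_{[0,\infty)}$ is a ray asymptotic to $\eta$ from $x$, so Lemma~\ref{lm:asymp}(iii) gives $\dot{\sigma'}(0)=\Nabla\bb_{\eta}(x)=\dot\sigma(0)$, and two geodesics with the same initial velocity coincide. I expect the only delicate point to be the bookkeeping for the backward half-line—keeping straight the identifications $\rev{\Nabla}\bb_{\bar\eta}=-\Nabla\bb_{\eta}$ and $\rev{g}_{\rev{\Nabla}(-r)}=g_{\Nabla r}$—but Proposition~\ref{pr:harm} has already done the substantive analytic work (harmonicity, smoothness, non-vanishing gradient), so this lemma should be a short consequence.
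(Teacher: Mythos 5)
Your proof is correct and is exactly the expansion the paper has in mind when it says the lemma follows by ``combining Proposition~\ref{pr:harm} with Lemma~\ref{lm:asymp}(iii)'': you use the smoothness and the identity $\bb_{\eta}=-\bb_{\bar\eta}$ from Proposition~\ref{pr:harm} to apply Lemma~\ref{lm:asymp}(iii) separately in the forward and reverse directions (with the bookkeeping $\rev{\Nabla}\bb_{\bar\eta}=-\Nabla\bb_{\eta}$), and then the $1$-Lipschitz bound upgrades the affine behavior of $\bb_{\eta}\circ\sigma$ to global minimization. The uniqueness step via Lemma~\ref{lm:asymp}(iii) is also as intended.
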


Lemma~\ref{lm:asymp}(ii) implies not only $\Nabla\bb_{\eta} \neq 0$ but also that
every integral curve of $\Nabla\bb_{\eta}$ is geodesic.
Therefore $\Ric_N(\Nabla\bb_{\eta})=\Ric_N^{g_{\Nabla\bb_{\eta}}}(\Nabla\bb_{\eta})$
and we can apply the Cheeger-Gromoll-Lichnerowicz splitting theorem (\cite{CG1}, \cite{Li})
to the weighted Riemannian manifold $(M,g_{\Nabla\bb_{\eta}},m)$.

\begin{proposition}[Isometric splitting of $(M,g_{\Nabla\bb_{\eta}},m)$]\label{pr:Rsplit}
If $(M,F)$ contains a straight line $\eta:\R \lra M$,
then $(M,g_{\Nabla\bb_{\eta}})$ splits isometrically as $M=M' \times \R$ with $M'=\bb_{\eta}^{-1}(0)$,
and $\Psi \circ \dot{\sigma}$ is constant on the line $\sigma(s)=(x,s) \in M' \times \R$ for each $x \in M'$.
\end{proposition}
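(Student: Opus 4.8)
The plan is to pass to the weighted Riemannian manifold $(M,g_V,m)$ with $V:=\Nabla\bb_\eta$ and invoke the weighted Cheeger-Gromoll-Lichnerowicz splitting theorem; equivalently, and in a more self-contained way, one can run the splitting argument directly through the Bochner-Weitzenb\"ock formula of Theorem~\ref{th:BW}. First I would check that the set-up is legitimate. By Proposition~\ref{pr:harm}, $\bb_\eta\in\cC^\infty$, and by Lemma~\ref{lm:asymp}(ii) we have $F(V)\equiv 1$ and every integral curve of $V$ is an $F$-geodesic (indeed, by Lemma~\ref{lm:biasym} it is a bi-asymptotic straight line); moreover the given line $\eta$ is itself such an integral curve ($\dot\eta=V\circ\eta$, by Lemma~\ref{lm:asymp}(iii)). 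Hence $V$ is a non-vanishing $\cC^\infty$-vector field and $g_V$ is a genuine $\cC^\infty$-Riemannian metric on all of $M$, and by Lemma~\ref{lm:key} the integral curves of $V$ are also $g_V$-geodesics. Since $(g_V)_x=g_{\Nabla\bb_\eta(x)}$, the defining property of the Legendre transform gives $\nabla^{g_V}\bb_\eta=\Nabla\bb_\eta=V$, so $|\nabla^{g_V}\bb_\eta|_{g_V}^2=g_V(V,V)=F(V)^2\equiv 1$; thus $\bb_\eta$ is $1$-Lipschitz for the (symmetric) distance $d_{g_V}$, and since it grows at unit rate along the unit-speed $g_V$-geodesic $\eta$, the curve $\eta$ is a line of $(M,g_V)$. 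Finally, $\Delta^{\Nabla\bb_\eta}=\div_m\circ\nabla^{g_V}$ is precisely the weighted Laplacian of $(M,g_V,m)$ and $\Delta^{\Nabla\bb_\eta}\bb_\eta=\Lap\bb_\eta\equiv 0$ by Proposition~\ref{pr:harm}, so $\bb_\eta$ is harmonic on $(M,g_V,m)$; and when $N=\infty$ the density $f$ defined by $m=e^{-f}\vol_{g_V}$ equals $\Psi\circ V$ (cf.\ Remark~\ref{rm:Psi}), hence is bounded above on $M$.

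Next I would carry out the rigidity step. Plugging $u=\bb_\eta$ into Theorem~\ref{th:BW}, the identities $F(\Nabla\bb_\eta)^2/2\equiv 1/2$ and $\Lap\bb_\eta\equiv 0$ make the left-hand side vanish, so $0=\Ric_\infty(\Nabla\bb_\eta)+\|\Nabla^2\bb_\eta\|_{HS(\Nabla\bb_\eta)}^2\ge 0$, which forces $\Nabla^2\bb_\eta\equiv 0$ and $\Ric_\infty(\Nabla\bb_\eta)\equiv 0$; for $N<\infty$ the corresponding inequality in Theorem~\ref{th:BW} likewise gives $\Ric_N(\Nabla\bb_\eta)\equiv 0$. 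By Lemma~\ref{lm:key}, $\Nabla^2\bb_\eta\equiv 0$ means $D^{g_V}_w V=0$ for every $w$, i.e.\ $V$ is parallel with respect to $g_V$. A non-vanishing unit vector field that is $g_V$-parallel and whose integral curves are complete (they are exactly the bi-asymptotic straight lines $\sigma$ of Lemma~\ref{lm:biasym}) splits the metric in the standard way: its flow $\{\phi_s\}$ is a one-parameter group of $g_V$-isometries, $\bb_\eta\circ\phi_s=\bb_\eta+s$, and the map $(x,s)\mapsto\phi_s(x)$ is a diffeomorphism from $M'\times\R$ onto $M$ (bijectivity via $\bb_\eta\circ\phi_s=\bb_\eta+s$), with $M'=\bb_\eta^{-1}(0)$, which is an isometry onto $(M,g_V)$ for the product metric $g_V|_{M'}\oplus ds^2$. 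This is the asserted isometric splitting.

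For the statement on $\Psi$: since $V$ is $g_V$-parallel, $R^{g_V}(\,\cdot\,,V)V\equiv 0$, so $\Ric(\dot\sigma)=\Ric^{g_V}(\dot\sigma)\equiv 0$ along each line $\sigma(s)=(x,s)$. Combined with $\Ric_\infty(\Nabla\bb_\eta)\equiv 0$ this yields $(\Psi\circ\dot\sigma)''\equiv 0$, so $\Psi\circ\dot\sigma$ is affine in $s$; when $N<\infty$ one instead reads $(\Psi\circ\dot\sigma)'\equiv 0$ directly off $\Ric_N(\Nabla\bb_\eta)\equiv 0$. Since $\Psi$ is bounded above (for $N=\infty$) and $\sigma$ is defined on all of $\R$, $\Psi\circ\dot\sigma$ must be constant --- equivalently, this is the measure-splitting part of the weighted Cheeger-Gromoll-Lichnerowicz conclusion, read off through $m=e^{-\Psi(\dot\sigma)}\vol_{\dot\sigma}$.

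I expect the main obstacle to be the curvature input. The hypothesis $\Ric_N\ge 0$ for $F$ transfers to the weighted Riemannian bound $\Ric_N^{g_V}\ge 0$ only in the single direction $V$, because the Riemannian approximation $g_w$ of $F$ in another direction $w$ differs from $g_V$; so one cannot directly quote an off-the-shelf weighted Cheeger-Gromoll-Lichnerowicz theorem whose proof uses the Laplacian comparison in many directions. The point of routing through Theorem~\ref{th:BW}, together with Proposition~\ref{pr:harm} (which already supplies harmonicity of $\bb_\eta$, bypassing the usual subharmonicity-via-comparison step), is precisely that only the one-directional bound $\Ric_N(\Nabla\bb_\eta)\ge 0$ is needed. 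A secondary point is that $g_V$ need not be complete, so the global splitting should be produced from the flow of $V$ --- whose integral curves are complete by Lemma~\ref{lm:biasym} --- rather than by quoting de Rham's theorem.
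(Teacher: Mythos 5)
Your proof is correct and, for the metric-splitting part, follows exactly the paper's route: harmonicity from Proposition~\ref{pr:harm}, the Bochner--Weitzenb\"ock formula (Theorem~\ref{th:BW}) applied to $\bb_{\eta}$ to force $\Nabla^2\bb_{\eta}\equiv 0$ (which, via Lemma~\ref{lm:key}, is the $g_V$-Hessian), and then the flow of the $g_V$-parallel unit field $V=\Nabla\bb_{\eta}$ to produce the isometric product. Your care about using the completeness of integral curves rather than completeness of $g_V$ is a fair point the paper leaves implicit.

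Where you genuinely diverge is the measure-splitting step, the constancy of $\Psi\circ\dot\sigma$. The paper extracts it from the trace (Cauchy--Schwarz) inequality
\[
\|\nabla^2\bb_{\eta}\|_{HS}^2 \ge \frac{\bigl(\Delta\bb_{\eta}+D(\Psi\circ\nabla\bb_{\eta})(\nabla\bb_{\eta})\bigr)^2}{n},
\]
whose right-hand side is simply $(\tr_{g_V}\nabla^2\bb_{\eta})^2/n$: since both sides vanish and $\Delta\bb_{\eta}\equiv 0$, one gets $D(\Psi\circ\nabla\bb_{\eta})(\nabla\bb_{\eta})\equiv 0$, i.e.\ $(\Psi\circ\dot\sigma)'\equiv 0$, directly and uniformly in $N$. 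You instead argue $\Ric^{g_V}(V)\equiv 0$ from parallelism, then feed this into the definitions of $\Ric_\infty$ and $\Ric_N$ to get $(\Psi\circ\dot\sigma)''\equiv 0$ (and, when $N<\infty$, also $(\Psi\circ\dot\sigma)'\equiv 0$), and for $N=\infty$ invoke the boundedness of $\Psi$ to upgrade affine to constant. This is valid under the standing hypotheses, but it is less economical: the trace-inequality argument needs neither curvature nor boundedness, and the paper explicitly remarks that the boundedness of $\Psi$ is used only in Proposition~\ref{pr:subh}, not here. One small wording caveat: for $n<N<\infty$ you still need $(\Psi\circ\dot\sigma)''\equiv 0$ (from the $\Ric_\infty$ part) in addition to $\Ric_N(\Nabla\bb_{\eta})\equiv 0$ to kill $(\Psi\circ\dot\sigma)'$, so ``instead'' should really be ``in addition''; and for $N=n$ the conclusion follows from the very definition of $\Ric_n$ since $\Ric_n\ge 0$ already rules out $(\Psi\circ\dot\sigma)'\ne 0$.
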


\begin{proof}
We give a sketch of the proof for thoroughness,
see \cite[Theorem~1.1]{FLZ}, \cite[Theorem~6.1]{WW} for details.
Applying the Bochner-Weitzenb\"ock formula for $g_{\Nabla\bb_{\eta}}$ to $\bb_{\eta}$,
we deduce from $\Delta^{\Nabla\bb_{\eta}}\bb_{\eta}=\Lap\bb_{\eta} \equiv 0$ that
\begin{equation}\label{eq:RBW}
\Ric_{\infty}(\nabla\bb_{\eta}) +\| \nabla^2 \bb_{\eta} \|_{HS}^2
 =\Delta \bigg( \frac{|\nabla\bb_{\eta}|^2}{2} \bigg) -D(\Delta\bb_{\eta})(\nabla\bb_{\eta}) =0,
\end{equation}
where $\nabla$ and $\Delta$ are with respect to $g_{\Nabla\bb_{\eta}}$ and $m$.
Thus the hypothesis $\Ric_{\infty} \ge \Ric_N \ge 0$ shows $\nabla^2 \bb_{\eta} \equiv 0$,
namely $\nabla\bb_{\eta}$ is a parallel (and hence Killing) vector field.
Therefore the associated one-parameter family of transforms $\varphi_t:M \lra M$, $t \in \R$,
consists of isometries (with respect to $g_{\Nabla\bb_{\eta}}$)
and $M$ is isometric to $M' \times \R$ with $M':=\bb_{\eta}^{-1}(0)$.

In order to split the measure $m$, we observe from
\[ \| \nabla^2 \bb_{\eta} \|_{HS}^2
 \ge \frac{(\Delta\bb_{\eta} +D(\Psi \circ \nabla\bb_{\eta})(\nabla\bb_{\eta}))^2}{n} \]
that $D(\Psi \circ \nabla\bb_{\eta})(\nabla\bb_{\eta}) \equiv 0$
(see, for example, the calculation in the proof of \cite[Theorem~3.3]{OSbw}).
$\qedd$
\end{proof}

The boundedness of $\Psi$ is in fact necessary for splitting both $g_{\Nabla\bb_{\eta}}$ and $m$,
see \cite[Examples~2.1, 2.2]{WW} for simple interesting examples.
Recall that we used the boundedness only in Proposition~\ref{pr:subh}.

\begin{corollary}[Diffeomorphic splitting of $(M,m)$]\label{cr:dsplit} 
Assume that $(M,F)$ includes a straight line.
Then $(M,m)$ admits a diffeomorphic, measure-preserving splitting
$(M,m)=(M' \times \R,m' \times \bL^1)$, where $\bL^1$ is the one-dimensional
Lebesgue measure and $m':=m|_{M'}$.
\end{corollary}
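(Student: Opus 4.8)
The plan is to deduce Corollary~\ref{cr:dsplit} from Proposition~\ref{pr:Rsplit} by observing that the isometric splitting of $(M,g_{\Nabla\bb_\eta})$ obtained there already produces the desired diffeomorphism, and that the only thing left to check is the behaviour of the base measure $m$ under this identification. First I would recall that Proposition~\ref{pr:Rsplit} gives a one-parameter family $\varphi_t:M \lra M$, $t \in \R$, generated by the flow of $\Nabla\bb_\eta$, and that the map
\[ \Theta:M' \times \R \lra M, \qquad \Theta(x,s):=\varphi_s(x), \quad M':=\bb_\eta^{-1}(0), \]
is a $\cC^\infty$-diffeomorphism (it is a diffeomorphism because $\Nabla\bb_\eta$ is a nowhere-vanishing $\cC^\infty$-vector field with complete flow, and $M'$ is a smooth hypersurface meeting each flow line exactly once, by Lemma~\ref{lm:biasym} and the fact that $\bb_\eta(\varphi_s(x))=\bb_\eta(x)+s$ from Lemma~\ref{lm:asymp}(ii)). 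So the diffeomorphic part of the splitting is immediate, and it remains to compute $\Theta^* m$.

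Next I would identify $\Theta^* m$ with $m' \times \bL^1$, where $m':=m|_{M'}$ is the restriction (more precisely, the measure induced on the level set $M'=\bb_\eta^{-1}(0)$ by disintegrating $m$ along the function $\bb_\eta$). The key point is that the flow $\varphi_t$ of $\Nabla\bb_\eta$ preserves $m$. To see this, it suffices to show $\div_m(\Nabla\bb_\eta)=0$, which is exactly $\Lap\bb_\eta\equiv 0$ from Proposition~\ref{pr:harm} (and by that proposition it holds point-wise, so $\mathcal{L}_{\Nabla\bb_\eta} m = (\div_m \Nabla\bb_\eta)\, m = 0$). Hence $\varphi_t$ is measure-preserving for every $t$. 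Combining this with the coarea-type disintegration of $m$ with respect to $\bb_\eta$ — whose fibers are the hypersurfaces $\varphi_s(M')=\bb_\eta^{-1}(s)$, carried onto one another isometrically (with respect to $g_{\Nabla\bb_\eta}$) and measure-preservingly by the flow — one gets that, in the coordinates $(x,s)$, $m$ is a product: the $s$-marginal is $\bL^1$ because $F(\Nabla\bb_\eta)\equiv 1$ (Lemma~\ref{lm:asymp}(ii)) so $\bb_\eta$ is an arc-length parameter along the flow lines and $\div_m\Nabla\bb_\eta=0$ forces the conditional densities to be $s$-independent, and the conditional measure on each fiber is the pushforward of $m'$ under $\varphi_s$, which equals $m'$ read in the chosen coordinates. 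Thus $\Theta^*m = m' \times \bL^1$.

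The main obstacle is the measure-theoretic bookkeeping in the last step: making the disintegration of $m$ along $\bb_\eta$ precise and checking that the fact "$\varphi_t$ preserves $m$ and acts as translation in the $\R$-factor" really forces a \emph{product} structure rather than merely an $s$-independent family of fiber measures in disguise. This is handled cleanly by transporting everything to $M' \times \R$ via $\Theta$ first: there the flow $\varphi_s$ becomes $(x,t)\mapsto(x,t+s)$, and a Borel measure on $M'\times\R$ that is invariant under all these translations and projects to $\bL^1$ on the second factor is automatically of the form $\mu \times \bL^1$ for a unique Borel measure $\mu$ on $M'$ (a standard application of Fubini together with translation invariance); one then checks $\mu = m'$ by testing against functions supported near $s=0$. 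Everything else — smoothness of $\Theta$, nonvanishing of $\Nabla\bb_\eta$, completeness of its flow — is already furnished by Proposition~\ref{pr:harm}, Lemma~\ref{lm:asymp} and Proposition~\ref{pr:Rsplit}.
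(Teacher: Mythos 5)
Your proof is correct, and it takes a genuinely different route from the paper. The paper derives Corollary~\ref{cr:dsplit} as a consequence of Proposition~\ref{pr:Rsplit}, whose proof runs the Bochner--Weitzenb\"ock formula to get $\nabla^2\bb_\eta \equiv 0$ (making $\nabla\bb_\eta$ a Killing field for $g_{\Nabla\bb_\eta}$, hence $\varphi_t$ isometries of $g_{\Nabla\bb_\eta}$), and then a trace inequality to get $D(\Psi\circ\nabla\bb_\eta)(\nabla\bb_\eta)\equiv 0$ (so $\Psi$ is constant along the flow); the combination splits $m=e^{-\Psi}\vol_{g_{\Nabla\bb_\eta}}$ as a product. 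You instead go straight from the harmonicity $\Lap\bb_\eta=\div_m(\Nabla\bb_\eta)=0$ of Proposition~\ref{pr:harm} to $\mathcal{L}_{\Nabla\bb_\eta}m=(\div_m\Nabla\bb_\eta)\,m=0$, hence $\varphi_t^*m=m$ for all $t$, and then observe that a smooth, translation-invariant measure on $M'\times\R$ in the flow chart must be a product with $\bL^1$, the $M'$-marginal being exactly the $m'$ the paper defines. This bypasses the Bochner--Weitzenb\"ock argument entirely and is leaner for this particular corollary; the cost is that it does not also yield $\nabla^2\bb_\eta\equiv 0$ or the isometric splitting of $(M,g_{\Nabla\bb_\eta})$, which the corollary does not require but which the paper needs later (Theorem~\ref{th:para} and the Berwald-case results). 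Two minor clean-ups: your opening framing leans on Proposition~\ref{pr:Rsplit} to ``produce the diffeomorphism,'' but as you then note yourself, the diffeomorphism $\Theta(x,s)=\varphi_s(x)$ only needs $\Nabla\bb_\eta$ smooth, nonvanishing and with complete flow (from Proposition~\ref{pr:harm}, completeness of $(M,F)$, and the fact that the integral curves are straight lines per Lemma~\ref{lm:biasym}) plus $\bb_\eta\circ\varphi_s=\bb_\eta+s$ (Lemma~\ref{lm:asymp}(ii)), so the dependence on Proposition~\ref{pr:Rsplit} can be dropped; and in the translation-invariance step you should say explicitly that $\Theta^*m$ is a \emph{smooth} positive measure (pullback of a $\cC^\infty$ measure under a diffeomorphism), since translation invariance alone does not force a product for an arbitrary Borel measure on $M'\times\R$.
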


To be precise, the map
\[ (M' \times \R,m' \times \bL^1) \ni (x,t)\ \longmapsto\ \varphi_t(x) \in (M,m) \]
is diffeomorphic and measure-preserving.
We abused the notation that $m'=m|_{M'}$ denotes the projection of $m$ to $M'$,
namely $m'(U):=m(U \times [0,1])$ for any Borel set $U \subset M'$.
It is unclear if this splitting procedure can be iterated,
because it seems difficult to determine the structures of $(M',F|_{TM'},m')$ as well as
$(M',g_{\Nabla\bb_{\eta}}|_{TM'},m')$ from the construction in Proposition~\ref{pr:Rsplit}
(for instance, we do not know if they have the nonnegative curvature).

\begin{remark}\label{rm:tsplit}
Corollary~\ref{cr:dsplit} would be compared with the structure theorem
for a (non-branching) metric measure space $(X,d,m)$ satisfying the \emph{measure contraction property}
$\MCP(K,N)$ (see \cite{Omcp}, \cite[Section~5]{StII}) with $N \in (1,\infty)$ and $K>0$,
as $\CD(K,N)$ implies $\MCP(K,N)$.
If $(X,d)$ attains the maximal diameter $\pi\sqrt{(N-1)/K}$,
then $(X,m)$ is represented as the spherical suspension of some topological measure space
(\cite[Section~5]{Omcp2}).
It is unknown whether a splitting theorem similar to Corollary~\ref{cr:dsplit} holds
for general metric measure spaces satisfying $\MCP(0,N)$ or $\CD(0,N)$.
\end{remark}

Though the following theorem is essentially included in Proposition~\ref{pr:Rsplit},
we state it separately for future convenience.

\begin{theorem}\label{th:para}
Suppose that $(M,F,m)$ is forward and backward complete and satisfies $\Ric_N \ge 0$
for some $N \in [n,\infty]$, and that $\Psi$ is bounded above if $N=\infty$.
If $(M,F)$ contains a straight line $\eta:\R \lra M$, then we have $\Nabla^2 \bb_{\eta} \equiv 0$.
\end{theorem}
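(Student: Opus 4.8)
The plan is to deduce Theorem~\ref{th:para} directly from the analysis already carried out for the Riemannian structure $g_{\Nabla\bb_{\eta}}$. First I would recall from Lemma~\ref{lm:asymp}(ii) that $F(\Nabla\bb_{\eta}) \equiv 1$ and that every integral curve of $\Nabla\bb_{\eta}$ is a geodesic of $F$ (indeed these integral curves are the bi-asymptotic straight lines of Lemma~\ref{lm:biasym}). This is exactly the hypothesis of Lemma~\ref{lm:key} applied to $V := \Nabla\bb_{\eta}$, so the $F$-covariant derivatives with reference vector $\Nabla\bb_{\eta}$ agree with the $g_{\Nabla\bb_{\eta}}$-covariant derivatives in the relevant directions: $D^{\Nabla\bb_{\eta}}_{\Nabla\bb_{\eta}} W = D^{g_{\Nabla\bb_{\eta}}}_{\Nabla\bb_{\eta}} W$ and $D^{\Nabla\bb_{\eta}}_W \Nabla\bb_{\eta} = D^{g_{\Nabla\bb_{\eta}}}_W \Nabla\bb_{\eta}$ for any vector field $W$.

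Next I would observe that by Proposition~\ref{pr:harm}, $\bb_{\eta}$ is a $\cC^{\infty}$ function with $D\bb_{\eta} \neq 0$ everywhere, so $\Nabla^2\bb_{\eta}(x) = \Nabla(\Nabla\bb_{\eta})(x) \in T_x^*M \otimes T_xM$ is well-defined and smooth at every point; by definition $\Nabla^2\bb_{\eta}(v) = D^{\Nabla\bb_{\eta}}_v \Nabla\bb_{\eta}$. By the second identity of Lemma~\ref{lm:key} this equals $D^{g_{\Nabla\bb_{\eta}}}_v \Nabla\bb_{\eta} = \nabla^2\bb_{\eta}(v)$, the Hessian computed with respect to $g_{\Nabla\bb_{\eta}}$ (here one also uses that $\Nabla\bb_{\eta}$, the $F$-gradient, coincides with the $g_{\Nabla\bb_{\eta}}$-gradient of $\bb_{\eta}$, which holds because $g_{\Nabla\bb_{\eta}}$ is the osculating metric in the direction $\Nabla\bb_{\eta}$, so the duality pairing and hence the gradient agree there). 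Thus $\Nabla^2\bb_{\eta} \equiv 0$ if and only if $\nabla^2\bb_{\eta} \equiv 0$.

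Finally, $\nabla^2\bb_{\eta} \equiv 0$ was already established inside the proof of Proposition~\ref{pr:Rsplit}: applying the Bochner-Weitzenb\"ock formula (Theorem~\ref{th:BW}) for $g_{\Nabla\bb_{\eta}}$ to $\bb_{\eta}$ and using $\Delta^{\Nabla\bb_{\eta}}\bb_{\eta} = \Lap\bb_{\eta} \equiv 0$ together with $\Ric_{\infty} \ge \Ric_N \ge 0$ forces both the curvature term and $\|\nabla^2\bb_{\eta}\|_{HS(\Nabla\bb_{\eta})}^2$ to vanish, whence $\nabla^2\bb_{\eta} \equiv 0$. Combining the three steps gives $\Nabla^2\bb_{\eta} \equiv 0$.

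I expect the only genuinely delicate point to be the bookkeeping in the second step: one must be careful that the nonlinear object $\Nabla^2\bb_{\eta}$, defined via the Chern connection with reference vector $\Nabla\bb_{\eta}$, is literally the same tensor as the Riemannian Hessian $\nabla^2\bb_{\eta}$ of $g_{\Nabla\bb_{\eta}}$, and in particular that the gradient used on the inside ($\nabla\bb_{\eta}$ in the Riemannian sense) agrees with $\Nabla\bb_{\eta}$; both facts hinge on the geodesic property of the integral curves of $\Nabla\bb_{\eta}$ via Lemma~\ref{lm:key} and on the definition of $g_v$ as the osculating metric. Everything else is a direct citation of Theorem~\ref{th:BW} and the computation already present in Proposition~\ref{pr:Rsplit}.
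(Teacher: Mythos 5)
Your proof is correct and takes essentially the same approach as the paper. The paper's main line applies the Finsler Bochner--Weitzenb\"ock formula (Theorem~\ref{th:BW}) directly to the harmonic function $\bb_{\eta}$ and then remarks that, because the integral curves of $\Nabla\bb_{\eta}$ are geodesic, this coincides with the Riemannian formula for $g_{\Nabla\bb_{\eta}}$; you simply spell out that remark by invoking Lemma~\ref{lm:key} to identify $\Nabla^2\bb_{\eta}$ with the Riemannian Hessian of $g_{\Nabla\bb_{\eta}}$ (using that the $F$-gradient and $g_{\Nabla\bb_{\eta}}$-gradient of $\bb_{\eta}$ agree) and then citing the computation already carried out in the proof of Proposition~\ref{pr:Rsplit}.
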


\begin{proof}
Applying the Bochner-Weitzenb\"ock formula (Theorem~\ref{th:BW}) to the harmonic function
$\bb_{\eta}$, we obtain
\[ \Ric_{\infty}(\Nabla\bb_{\eta}) +\|\Nabla^2 \bb_{\eta}\|_{HS(\Nabla\bb_{\eta})}^2=0 \]
and hence $\Ric_{\infty} \ge \Ric_N \ge 0$ implies $\Nabla^2 \bb_{\eta} \equiv 0$.
(We remark that this formula in fact coincides with the Bochner-Weitzenb\"ock formula~\eqref{eq:RBW}
for $g_{\Nabla\bb_{\eta}}$ due to the fact that all integral curves of $\Nabla\bb_{\eta}$
are geodesic, see \cite[Remark~3.4]{OSbw}.
Thus we do not really need the formula in \cite{OSbw}, the formula in the Riemannian case is enough.)
$\qedd$
\end{proof}

One may be able to derive from $\Nabla^2 \bb_{\eta} \equiv 0$ some more information
on the structure of $(M,F)$, whereas we have succeeded only in the Berwald case
(discussed in the next section).

\section{Berwald case}\label{sc:Bsplit}

In this final section, we investigate a more detailed splitting phenomenon of Berwald spaces
(recall Definition~\ref{df:Btype}).
Throughout the section, let $(M,F,m)$ be a forward and backward complete Berwald space,
and assume that $\Ric_N \ge 0$ for some $N \in [n,\infty]$ and $\Psi$ is bounded above if $N=\infty$.
By the definition of Berwald spaces,
the covariant derivative~\eqref{eq:covd} does not depend on the choice of a reference vector,
so that we will omit reference vectors in this section.
In particular, the covariant derivative is linear in the sense that $D_v(W+X)=D_v W+D_v X$.

A subset $A \subset M$ is said to be \emph{totally convex}
if any minimal geodesic joining two points in $A$ is contained in $A$.
We say that $A \subset M$ is \emph{geodesically complete} if,
for any geodesic $\eta:(0,\ve) \lra M$ contained in $A$,
its extension $\eta:\R \lra M$ as a geodesic is still contained in $A$.

\begin{lemma}\label{lm:tconv}
Suppose that $(M,F)$ contains a straight line $\eta:\R \lra M$.
Then, given any geodesic $\xi:[0,l] \lra M$, we have $(\bb_{\eta} \circ \xi)'' \equiv 0$.
In particular, for each $t \in \R$, $\bb_{\eta}^{-1}(t)$ is totally convex and geodesically complete.
\end{lemma}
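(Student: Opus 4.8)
The plan is to reduce the statement to a formal computation with the single Riemannian metric $g:=g_{\Nabla\bb_\eta}$, exploiting that in a Berwald space the covariant derivative $D$ carries no reference vector. First I would collect the relevant facts about $\bb_\eta$: by Proposition~\ref{pr:harm} it is $\cC^\infty$, and by Lemma~\ref{lm:asymp}(ii) (as recorded after Lemma~\ref{lm:biasym}) one has $F(\Nabla\bb_\eta)\equiv 1$, so that $W:=\Nabla\bb_\eta$ is a globally non-vanishing $\cC^\infty$-vector field and $g=g_W$ is a genuine Riemannian metric on $M$. Moreover Theorem~\ref{th:para} gives $\Nabla^2\bb_\eta\equiv 0$, which in the Berwald case simply says $D_X W=0$ for every vector field $X$, i.e.\ $W$ is $D$-parallel; note also that every integral curve of $W$ is a geodesic (Lemma~\ref{lm:asymp}(ii)).

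The crucial step is to show that $D$ is compatible with $g$, that is $Dg=0$. For any $\cC^1$-curve $c$, the $D$-parallel transport $P^c_s$ along $c$ is a linear isometry $(T_{c(0)}M,F)\to(T_{c(s)}M,F)$ by Proposition~\ref{pr:Ich}. Since the osculating inner product $g_v$ is the Hessian of $\tfrac12 F^2$ at $v$, it is natural under linear $F$-isometries, so $P^c_s$ is also an isometry from $(T_{c(0)}M,g_{W(c(0))})$ onto $(T_{c(s)}M,g_{P^c_s W(c(0))})$. As $W$ is $D$-parallel, $P^c_s W(c(0))=W(c(s))$, hence $P^c_s$ preserves $g$; thus $g$ is invariant under $D$-parallel transport, which is exactly $Dg=0$. (Alternatively one can derive $Dg=0$ from Lemma~\ref{lm:key} together with the almost $g$-compatibility of the Chern connection~\eqref{eq:Gamma}.) I expect this identification to be the main obstacle: it is the precise place where the Berwald hypothesis and $\Nabla^2\bb_\eta\equiv 0$ are used together, and the reason this refinement is confined to Berwald spaces.

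Granting $Dg=0$, the rest is a short calculation. By the definition of the gradient through the Legendre transform, $D\bb_\eta(v)=g(W,v)$ for every $v\in TM$. Hence for any geodesic $\xi:[0,l]\to M$ one has $(\bb_\eta\circ\xi)'(s)=g(W,\dot\xi)$ along $\xi$, and differentiating with the help of $Dg=0$ yields
\[ (\bb_\eta\circ\xi)''(s)=g\big(D_{\dot\xi}W,\dot\xi\big)+g\big(W,D_{\dot\xi}\dot\xi\big). \]
The first term vanishes because $D_{\dot\xi}W=\Nabla^2\bb_\eta(\dot\xi)=0$ by Theorem~\ref{th:para}, and the second vanishes because $\xi$ is a geodesic, so $D_{\dot\xi}\dot\xi=0$ by~\eqref{eq:geod}. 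Therefore $(\bb_\eta\circ\xi)''\equiv 0$.

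Finally, fix $t\in\R$. If $p,q\in\bb_\eta^{-1}(t)$ and $\xi:[0,l]\to M$ is a minimal geodesic from $p$ to $q$, then $\bb_\eta\circ\xi$ is affine and takes the value $t$ at both endpoints, hence $\bb_\eta\circ\xi\equiv t$ and $\xi([0,l])\subset\bb_\eta^{-1}(t)$; thus $\bb_\eta^{-1}(t)$ is totally convex. If $\xi:(0,\ve)\to M$ is a geodesic contained in $\bb_\eta^{-1}(t)$, extend it to a geodesic $\xi:\R\to M$ (possible since $(M,F)$ is forward and backward complete); then $\bb_\eta\circ\xi$ is affine on $\R$ and constant equal to $t$ on $(0,\ve)$, so $\bb_\eta\circ\xi\equiv t$ and the extended geodesic stays in $\bb_\eta^{-1}(t)$. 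Hence $\bb_\eta^{-1}(t)$ is geodesically complete.
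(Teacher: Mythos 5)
Your proof is correct and reaches the same algebraic identity the paper uses, namely
\[
(\bb_{\eta}\circ\xi)'' = g_{\Nabla\bb_{\eta}}\big(D_{\dot{\xi}}\Nabla\bb_{\eta},\dot{\xi}\big) + g_{\Nabla\bb_{\eta}}\big(\Nabla\bb_{\eta},D_{\dot{\xi}}\dot{\xi}\big),
\]
with the first term annihilated by Theorem~\ref{th:para} and the second by the geodesic equation, which in a Berwald space reads simply $D_{\dot{\xi}}\dot{\xi}=0$ with no reference vector. The route to that identity is where you diverge from the paper. The paper cites the almost $g$-compatibility of the Chern connection (\cite[Exercises~10.1.1, 10.1.2]{BCS}), a general Finsler fact whose Cartan-tensor error term drops out here because one of the arguments is the reference vector $\Nabla\bb_{\eta}$ itself (cf.~\eqref{eq:Av}); the Berwald hypothesis is then invoked only to identify $D^{\Nabla\bb_{\eta}}_{\dot{\xi}}\dot{\xi}$ with $D^{\dot{\xi}}_{\dot{\xi}}\dot{\xi}=0$. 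You instead prove from scratch that in a Berwald space the (reference-vector-free) covariant derivative is genuinely compatible with $g_{\Nabla\bb_{\eta}}$: Proposition~\ref{pr:Ich} says parallel transport is a linear $F$-isometry, hence carries $g_v$ to $g_{Pv}$, and $D\Nabla\bb_{\eta}\equiv 0$ from Theorem~\ref{th:para} makes it send $\Nabla\bb_{\eta}$ to $\Nabla\bb_{\eta}$, so it preserves $g_{\Nabla\bb_{\eta}}$, i.e.\ $Dg_{\Nabla\bb_{\eta}}=0$. This is a clean, self-contained substitute for the citation. The trade-off is mostly cosmetic: you commit to the Berwald assumption globally from the outset (needed to even speak of $D$ and $Dg=0$), whereas the paper's phrasing localizes its role to the single term $D^{\Nabla\bb_{\eta}}_{\dot{\xi}}\dot{\xi}$; the substance is the same, and your closing total-convexity and geodesic-completeness argument also matches the paper's.
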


\begin{proof}
We observe
\[ (\bb_{\eta} \circ \xi)''=g_{\Nabla\bb_{\eta}}(D_{\dot{\xi}}(\Nabla\bb_{\eta}),\dot{\xi})
 +g_{\Nabla\bb_{\eta}}(\Nabla\bb_{\eta},D_{\dot{\xi}} \dot{\xi}) =0 \]
(see \cite[Exercises~10.1.1, 10.1.2]{BCS} for the first equality).
To be precise, the first term vanishes in general by Theorem~\ref{th:para},
while the second term vanishes only in Berwald spaces
(since the covariant derivatives have $\Nabla\bb_{\eta}$ as the reference vector).
We in particular find that $\bb_{\eta} \circ \xi$ is constant if $\bb_{\eta}(\xi(r))=\bb_{\eta}(\xi(s))$
for some $r \neq s$, so that $\bb_{\eta}^{-1}(t)$ is totally convex and geodesically complete.
$\qedd$
\end{proof}

Define $\varphi_t:M \lra M$, $t \in \R$, as the one-parameter family
of $\cC^{\infty}$-transforms generated from $\Nabla\bb_{\eta}$.
Precisely, $\del \varphi_t/\del t =\Nabla\bb_{\eta}(\varphi_t)$.

\begin{proposition}\label{pr:Bsplit}
Suppose that $(M,F)$ contains a straight line $\eta:\R \lra M$.
Then we have the following.

\begin{enumerate}[{\rm (i)}]
\item
For any $t \in \R$, $\varphi_t$ is a measure-preserving isometry
such that $\varphi_t(M_0)=M_t$, where we set $M_t:=\bb_{\eta}^{-1}(t)$.
Moreover, it holds $M=\bigsqcup_{t \in \R} \varphi_t(M_0)$.
\item
The $(n-1)$-dimensional submanifold $(M_0,F|_{TM_0},m_0)$ is again of Berwald type
and satisfies $\Ric_{N-1} \ge 0$, where $m_0:=m|_{M_0}$
$($as in Corollary~$\ref{cr:dsplit})$.
\item
Define the projection $\rho:M \lra M_0$ by $\rho(\varphi_t(x)):=x$ for $(x,t) \in M_0 \times \R$.
Then a curve $\xi:\R \lra M$ is geodesic if and only if the projections $\rho(\xi):\R \lra M_0$
and $\bb_{\eta}(\xi):\R \lra \R$ are geodesic.
\end{enumerate}
\end{proposition}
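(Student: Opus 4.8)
The plan is to run everything off the single identity $\Nabla^2\bb_{\eta}\equiv 0$ of Theorem~\ref{th:para}, which in the Berwald case (where the covariant derivative $D$ is a genuine reference-free, torsion-free affine connection) says exactly that $\Nabla\bb_{\eta}$ is a globally $D$-parallel unit vector field, together with Proposition~\ref{pr:Ich} that parallel transport along curves is a linear isometry of the tangent norms. For (i): given $w\in T_xM$, put $W(t):=d\varphi_t(w)$ along the orbit $\gamma_x(t)=\varphi_t(x)$, which is a geodesic by Lemma~\ref{lm:asymp}(ii); realising $W$ as the variational field of $t\mapsto\varphi_t(c(r))$ with $\dot c(0)=w$ and using torsion-freeness gives $D_{\dot\gamma_x}W=D_W\Nabla\bb_{\eta}=0$, so $d\varphi_t$ is the parallel transport along $\gamma_x$, hence a linear isometry of the norms by Proposition~\ref{pr:Ich}; thus $\varphi_t$ preserves $F$ and $d$, and it is measure-preserving because under the splitting of Corollary~\ref{cr:dsplit} it is translation by $t$ in the $\R$-factor. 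Since $D\bb_{\eta}(\Nabla\bb_{\eta})=F(\Nabla\bb_{\eta})^2=1$ (Lemma~\ref{lm:asymp}(ii)) we get $\bb_{\eta}\circ\varphi_t=\bb_{\eta}+t$, so $\varphi_t(M_0)=M_t$ and $M=\bigsqcup_t M_t=\bigsqcup_t\varphi_t(M_0)$ as $\bb_{\eta}$ is surjective.

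For (ii), $M_0$ is totally geodesic by Lemma~\ref{lm:tconv}, and torsion-freeness of $D$ forces $D_XY\in TM_0$ whenever $X,Y$ are tangent to $M_0$; hence $D$ restricts to the Chern connection of $F|_{TM_0}$, which inherits reference-independence, so $(M_0,F|_{TM_0})$ is Berwald with Chern curvature $R^{M_0}(w,v)v=R(w,v)v$ for $v,w\in T_xM_0$. For the curvature bound fix $v\in U_xM_0$ and let $\eta$ be the geodesic with $\dot\eta(0)=v$, which lies in $M_0$ by total convexity; then $\Nabla\bb_{\eta}\circ\eta$ is parallel ($D_{\dot\eta}\Nabla\bb_{\eta}=\Nabla^2\bb_{\eta}(\dot\eta)=0$) and is the variational field of the geodesics $\varphi_s\circ\eta$, hence a Jacobi field, so the Jacobi operator $R_v:=R(\cdot,v)v$ kills $\Nabla\bb_{\eta}(x)$. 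Since also $R_v(T_xM_0)\subset T_xM_0$ and $R_v|_{T_xM_0}=R^{M_0}_v$, the operator $R_v$ is block-diagonal with a vanishing block for the (non-$g_v$-orthogonal) splitting $T_xM=T_xM_0\oplus\R\Nabla\bb_{\eta}(x)$; as the trace is basis-independent, $\Ric(v)=\tr_{g_v}R_v=\tr R^{M_0}_v=\Ric^{F|_{TM_0}}(v)$. For the weights, in coordinates $(y,t)$ adapted to the splitting (so that $m=m_0\times\bL^1$ and $\partial_t=\Nabla\bb_{\eta}$), $\Psi\circ\dot\eta-\Psi_0\circ\dot\eta$ equals $\tfrac12\log$ of a ratio of Gram determinants, i.e.\ the logarithm of the $g_{\dot\eta}$-length of the component of $\Nabla\bb_{\eta}$ normal to $TM_0$; parallel transport along $\eta$ is a $g_{\dot\eta}$-isometry (Proposition~\ref{pr:Ich}) fixing $\Nabla\bb_{\eta}$ and preserving $TM_0$, so this quantity is constant along $\eta$. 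Hence the first and second derivatives at $0$ of $\Psi\circ\dot\eta$ and of $\Psi_0\circ\dot\eta$ coincide, and since $(N-1)-(n-1)=N-n$ we conclude $\Ric_{N-1}^{M_0}(v)=\Ric_N(v)\ge 0$.

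For (iii), along any curve $\xi$ write $\dot\xi=W+\beta\Nabla\bb_{\eta}$ with $\beta=(\bb_{\eta}\circ\xi)'$ and $D\bb_{\eta}(W)\equiv 0$; using $D_{\dot\xi}\Nabla\bb_{\eta}=0$ this gives $D_{\dot\xi}\dot\xi=D_{\dot\xi}W+\beta'\Nabla\bb_{\eta}$. Writing $\xi(s)=\varphi_{t(s)}(\gamma(s))$ with $t=\bb_{\eta}\circ\xi$ and $\gamma=\rho\circ\xi$, a direct computation using that $\Nabla\bb_{\eta}$ is parallel and that $\varphi_t$ is affine (isometries preserve $D$) yields $D_{\dot\xi}W=d\varphi_{t(s)}\bigl((D_{\dot\gamma}\dot\gamma)(s)\bigr)$, which is tangent to $M_{t(s)}$; thus $D_{\dot\xi}W$ is the component of $D_{\dot\xi}\dot\xi$ annihilated by $D\bb_{\eta}$, while $D\bb_{\eta}(\beta'\Nabla\bb_{\eta})=\beta'$. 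Therefore $D_{\dot\xi}\dot\xi=0$ is equivalent to $\beta'\equiv 0$ together with $D_{\dot\gamma}\dot\gamma\equiv 0$, i.e.\ to $\bb_{\eta}\circ\xi$ being a geodesic of $\R$ and $\rho\circ\xi$ a geodesic of $M_0$; both implications of (iii) follow.

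I expect the Ricci bound of (ii) to be the real work: one has to isolate the single curvature term by which $\Ric^F$ on $M$ could exceed $\Ric^{F|_{TM_0}}$ on $M_0$, recognise it as the Jacobi curvature in the $\Nabla\bb_{\eta}$-direction, see that it vanishes because $\Nabla\bb_{\eta}\circ\eta$ is a parallel Jacobi field, and then match the two weight functions $\Psi,\Psi_0$. The delicate point throughout is that the osculating inner products $g_v$ for $v\in T_xM_0$ are not adapted to the splitting $T_xM=T_xM_0\oplus\R\Nabla\bb_{\eta}(x)$, so one must argue via basis-independence of the trace and via parallel transport (Proposition~\ref{pr:Ich}) rather than via $g_v$-orthogonal projections.
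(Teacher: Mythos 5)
Your proof is correct, and it takes a genuinely different route in parts (ii) and (iii). For (i) you and the paper both reduce to showing that $V(t)=D\varphi_t(v)$ is $D$-parallel along the orbit $\gamma_x$, but your mechanism is different: you exhibit $V$ as the variation field of a family of orbits and use torsion-freeness plus $\Nabla^2\bb_\eta\equiv 0$, whereas the paper first invokes the Riemannian splitting of $(M,g_{\Nabla\bb_\eta},m)$ from Proposition~\ref{pr:Rsplit} to get $g_{\Nabla\bb_\eta}$-parallelism, then uses Lemma~\ref{lm:key} to convert this to $D$-parallelism; your path bypasses Lemma~\ref{lm:key} entirely and is arguably cleaner. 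The more substantial divergence is in (ii) and (iii). The paper proves (iii) by a coordinate computation: in coordinates $(x^1,\dots,x^n)$ with $\bb_\eta=x^n$ and $g_{\Nabla\bb_\eta}$-orthonormal coordinate frame, it shows via \eqref{eq:Nij}--\eqref{eq:Gamma} that $\Gamma^i_{jk}=0$ unless all of $i,j,k\ne n$, from which the geodesic equation visibly factorizes; it then deduces (ii) by referring to this coordinate splitting and the product structure $m=m_0\times\bL^1$, with no explicit Jacobi-field argument. You instead prove (iii) coordinate-free via the decomposition $\dot\xi=W+\beta\Nabla\bb_\eta$ and the fact that isometries are $D$-affine, and you prove (ii) independently: you isolate the extra curvature contribution as $R_v(\Nabla\bb_\eta)$, recognize $\Nabla\bb_\eta\circ\sigma$ as a parallel Jacobi field along the $M_0$-geodesic $\sigma$ so that $R_v(\Nabla\bb_\eta)=0$, and then carefully compute the trace in the non-$g_v$-orthogonal splitting $T_xM=T_xM_0\oplus\R\Nabla\bb_\eta$ by basis-independence; your treatment of the weight $\Psi-\Psi_0$ via the Gram-determinant ratio and parallel transport is also more explicit than the paper's. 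Your observation that the splitting is not $g_v$-orthogonal, and that one must therefore argue through the trace rather than orthogonal projections, is a point the paper does not spell out but which is genuinely present. One small notational hazard: in (ii) you reuse the letter $\eta$ for the geodesic with $\dot\eta(0)=v$, which clashes with the fixed straight line $\eta$; and your phrase that torsion-freeness alone ``forces $D_XY\in TM_0$'' compresses the standard argument that needs both torsion-freeness and total geodesicity (via polarization), but these are cosmetic and the mathematics is sound.
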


\begin{proof}
(i)
We have already seen in Corollary~\ref{cr:dsplit} that $\varphi_t$ is measure-preserving.
Given any $v \in T_xM$, the isometric splitting of $(M,g_{\Nabla\bb_{\eta}},m)$
(Proposition~\ref{pr:Rsplit}) shows that $V(t):=D\varphi_t(v)$ is a parallel vector field
with respect to $g_{\Nabla\bb_{\eta}}$ along the geodesic $\sigma(t)=\varphi_t(x)$.
Hence it follows from Lemma~\ref{lm:key} that
$D_{\dot{\sigma}}V=D_{\dot{\sigma}}^{\dot{\sigma}}V
=D_{\dot{\sigma}}^{g_{\Nabla\bb_{\eta}}}V \equiv 0$.
Thus we obtain
\[ \frac{\del}{\del t}\big[ F(V)^2 \big] =\frac{\del}{\del t}\big[ g_V(V,V) \big]
 =2g_V(D_{\dot{\sigma}}V,V) \equiv 0 \]
and $\varphi_t$ is isometric (we were again indebted to the fine property
$D_{\dot{\sigma}}^V V =D_{\dot{\sigma}}^{\dot{\sigma}} V$ of Berwald spaces).

(ii)
The total convexity in Lemma~\ref{lm:tconv} guarantees that $(M_0,F|_{TM_0})$ is of Berwald type
(via the characterization~(e) in \cite[Theorem~10.2.1]{BCS} for instance).
In order to see $\Ric_{N-1} \ge 0$,
recall from Corollary~\ref{cr:dsplit} that $m_0$ enjoys $m=m_0 \times \bL^1$.
Fix a unit vector $v \in U_xM_0$ and extend it to a vector field $V_0$ on a neighborhood
$U \subset M_0$ of $x$ such that all integral curves of $V_0$ are geodesic.
We further extend $V_0$ to $V$ on $U \times (-\ve,\ve) \subset M$ by $V(y,t)=(V_0(y),0) \in T_{(y,t)}M$.
Then all integral curves of $V$ are geodesic and we deduce
\[ \Ric_{N-1}^{M_0}(v) =\Ric_N^M\! \big( (v,0) \big) \ge 0 \]
from (iii) below (as in Corollary~\ref{cr:Bsplit}(ii)) and the definition of $\Ric_N$ (since $(N-1)-(n-1)=N-n$).

(iii)
We can split the geodesic equation~\eqref{eq:geod} for $M$ into those for $M_0$ and $\R$
by virtue of a special property of Berwald spaces.
Take an open set $U \subset M_0$ and a coordinate $(x^1,x^2,\ldots,x^n)$
of $U \times \R \subset M$ such that $\bb_{\eta}(x)=x^n$
and that $(\del/\del x^i)_{i=1}^n$ is orthonormal with respect to $g_{\Nabla\bb_{\eta}}$.
As $\Gamma^i_{jk}$ is constant on every tangent space $T_xM \setminus 0$,
let us denote it by $\Gamma^i_{jk}(x)$.
We shall calculate
\begin{align*}
\Gamma^i_{jk}(x) &=\Gamma^i_{jk}\big( \Nabla\bb_{\eta}(x) \big) \\
&= \gamma^i_{jk}\big( \Nabla\bb_{\eta}(x) \big)
 -\sum_{m=1}^n (A_{jim}N^m_k +A_{ikm}N^m_j -A_{jkm}N^m_i) \big( \Nabla\bb_{\eta}(x) \big).
\end{align*}
Note that
\[ \frac{\del g_{ij}}{\del x^n}(\Nabla\bb_{\eta})=0 \qquad \text{for}\ i,j=1,2,\ldots,n \]
by (i), and that
\[ \frac{\del g_{in}}{\del x^j}(\Nabla\bb_{\eta})=0 \qquad \text{for}\ i,j=1,2,\ldots,n \]
by $g_{\Nabla\bb_{\eta}}(\Nabla\bb_{\eta},TM_0)=0$ (if $i \neq n$)
and $F(\Nabla\bb_{\eta}) \equiv 1$ (if $i=n$).
Hence $\gamma^i_{jk}(\Nabla\bb_{\eta}(x))=0$ unless $i,j,k \neq n$.
In particular, it follows from \eqref{eq:Nij} that $N^i_j(\Nabla\bb_{\eta}(x))=0$
for all $i,j=1,2,\ldots,n$.
Therefore we have $\Gamma^i_{jk}(x)=0$ unless $i,j,k \neq n$, so that
\[ D_{\dot{\xi}}\dot{\xi} =\sum_{i=1}^{n-1} \bigg\{ \ddot{\xi}^i
 +\sum_{j,k=1}^{n-1}\Gamma^i_{jk}(\xi) \dot{\xi}^j \dot{\xi}^k \bigg\} \frac{\del}{\del x^i}\Big|_{\xi}
 +\ddot{\xi}^n \frac{\del}{\del x^n}\Big|_{\xi}. \]
Thus the geodesic equation is split and we complete the proof.
$\qedd$
\end{proof}

We remark that, different from the Riemannian case, one can not reconstruct
$(M,F)$ only from $(M_0,F|_{TM_0})$.
Indeed, given $x \in M_0$, all we know is $F|_{T_xM_0}$ and the fact that
$T_xM_0 \perp \Nabla\bb_{\eta}(x)$ with respect to $g_{\Nabla\bb_{\eta}}$.
They give us only a little information about $F|_{T_xM \setminus T_xM_0}$
(see the related discussion in Remark~\ref{rm:k<l} below).

We can iterate the procedure in Proposition~\ref{pr:Bsplit} and obtain the following.

\begin{corollary}\label{cr:Bsplit}
\begin{enumerate}[{\rm (i)}]
\item
There exists a $k$-parameter family of measure-preserving isometries $\varphi_p:M \lra M$, $p \in \R^k$,
and an $(n-k)$-dimensional totally convex, geodesically complete submanifold $M' \subset M$ such that
\begin{itemize}
\item $(M',F|_{TM'})$ does not contain a straight line$;$
\item $(M',F|_{TM'},m')$ is of Berwald type with $\Ric_{N-k} \ge 0$, where $m':=m|_{M'};$
\item $\bigsqcup_{p \in \R^k} \varphi_p(M')=M;$
\item $\varphi_{p+q}=\varphi_q \circ \varphi_p$ for any $p,q \in \R^k$.
\end{itemize}
In particular, $(M,m)$ admits a diffeomorphic, measure-preserving splitting
$(M,m)=(M' \times \R^k,m' \times \bL^k)$.

\item
For each $x \in M$, $\Sigma_x:=\{ \varphi_p(x) \}_{p \in \R^k}$ is a $k$-dimensional submanifold
of $M$ whose flag curvature $($with respect to the restriction of $F)$ vanishes everywhere.

\item
For any $x,y \in M'$, $(\Sigma_x,F|_{T\Sigma_x})$ is isometric to $(\Sigma_y,F|_{T\Sigma_y})$.
\end{enumerate}
\end{corollary}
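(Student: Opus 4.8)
The plan is to iterate Proposition~\ref{pr:Bsplit} and then analyze the resulting fiber submanifolds $\Sigma_x$. For part~(i), I would proceed by induction on the splitting procedure. Starting from $(M,F,m)$ with a straight line $\eta$, Proposition~\ref{pr:Bsplit} produces $M_0=\bb_\eta^{-1}(0)$ which is a forward and backward complete Berwald space with $\Ric_{N-1}\ge 0$, together with the flow $\varphi_t$. If $(M_0,F|_{TM_0})$ still contains a straight line, we apply Proposition~\ref{pr:Bsplit} again; crucially, we must check that the new flow on $M_0$ extends to a flow on $M$ that commutes with the old one. This follows because, by Proposition~\ref{pr:Bsplit}(iii), geodesics (hence straight lines and Busemann functions) on $M$ decompose into their $M_0$-component and their $\R$-component, so the gradient vector field of the second Busemann function is tangent to the $M_0$-slices and its flow commutes with $\varphi_t$. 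One should note that the process terminates after at most $k\le n-1$ steps since each step drops the dimension by one and the effective dimension parameter by one, eventually reaching an $M'$ with $\Ric_{N-k}\ge 0$ containing no straight line. Reparametrizing the composite flow by $p\in\R^k$ gives the abelian family $\varphi_p$ with $\varphi_{p+q}=\varphi_q\circ\varphi_p$, and the diffeomorphic measure-preserving splitting $(M,m)=(M'\times\R^k,m'\times\bL^k)$ is assembled from the $k$ applications of Corollary~\ref{cr:dsplit}.

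For part~(ii), fix $x\in M$ and consider $\Sigma_x=\{\varphi_p(x)\}_{p\in\R^k}$. This is the orbit of the abelian group action, hence a $k$-dimensional immersed submanifold; it is embedded because $\bigsqcup_{p}\varphi_p(M')=M$ identifies $\Sigma_x$ with the $\R^k$-coordinate through the unique $y\in M'$ with $x\in\Sigma_y$. To compute its flag curvature, I would use the formula strategy of Lemma~\ref{lm:tconv} iterated: the gradient vector fields $\Nabla\bb_{\eta_1},\dots,\Nabla\bb_{\eta_k}$ of the successive Busemann functions are $g$-orthonormal (after the splitting), parallel with respect to each relevant $g_V$, and — by $\Nabla^2\bb_{\eta_j}\equiv 0$ from Theorem~\ref{th:para} together with the Berwald property giving $D_w w$ reference-vector-independent — they are genuinely parallel for the Chern connection restricted to $\Sigma_x$. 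Since $T\Sigma_x$ is spanned by these parallel vector fields and the flag curvature of $(\Sigma_x,F|_{T\Sigma_x})$ is the sectional curvature of $g_V$ for appropriate $V$ tangent to $\Sigma_x$, and that Riemannian metric is flat (it has a parallel orthonormal frame), the flag curvature vanishes identically. Alternatively one can invoke Proposition~\ref{pr:Bsplit}(iii) applied $k$ times to see that geodesics of $\Sigma_x$ are straight lines in the $\R^k$-factor, forcing flatness.

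For part~(iii), I would show that for $x,y\in M'$ the parallel transport along any $\cC^1$-curve in $M'$ from $x$ to $y$ carries $(\Sigma_x,F|_{T\Sigma_x})$ isometrically to $(\Sigma_y,F|_{T\Sigma_y})$. By Proposition~\ref{pr:Ich}, since $(M,F)$ is Berwald, parallel transport along any curve is a linear isometry $T_xM\to T_yM$; the point is that it maps $T_x\Sigma_x$ onto $T_y\Sigma_y$. This holds because $T\Sigma_x$ is exactly the span of the parallel vector fields $\Nabla\bb_{\eta_1},\dots,\Nabla\bb_{\eta_k}$, and parallel vector fields are preserved under parallel transport; equivalently, $T\Sigma_x$ is the $g$-orthogonal complement of $TM'$ which is invariant under the holonomy because both $TM'$ and its complement are parallel distributions by the splitting. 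Then, since both $\Sigma_x$ and $\Sigma_y$ carry the flat affine structure coming from the global coordinate $p\in\R^k$ and the linear isometry of tangent spaces matches up these coordinates, the map $\varphi_p(x)\mapsto\varphi_p(y)$ is an isometry.

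The main obstacle is part~(ii): verifying carefully that restricting $F$ to the orbit $\Sigma_x$ yields a \emph{locally Minkowskian} (in fact, via Szab\'o's theorem and flatness, genuinely flat Berwald) structure whose flag curvature is zero, rather than merely that $\Sigma_x$ is totally geodesic. The subtlety is that flag curvature of a submanifold is an intrinsic notion for $F|_{T\Sigma_x}$, and one must ensure that the parallel frame argument — which uses $\Nabla^2\bb_{\eta_j}\equiv 0$ and the reference-vector independence of the Berwald connection — actually produces vanishing intrinsic flag curvature and not just vanishing of the ambient sectional curvature along $\Sigma_x$. I expect this to follow from the splitting of the geodesic equation in Proposition~\ref{pr:Bsplit}(iii) applied iteratively, which shows geodesics of $\Sigma_x$ are affine in $p$, hence $F|_{T\Sigma_x}$ is locally Minkowskian with zero flag curvature.
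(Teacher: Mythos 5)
Your plan for (i) and (iii) matches the paper's. For (i) you iterate Proposition~\ref{pr:Bsplit}, and the step you gloss over is exactly the crux the paper spells out: one must check that the vector field $(\Nabla\bb_{\eta_2},0)$ on $M=M_1\times\R$ is the gradient of the Busemann function of $\eta_2$ computed on all of $M$, which the paper does by observing that all integral curves of this field are mutually bi-asymptotic straight lines and then invoking Lemma~\ref{lm:biasym}; merely saying ``the flows commute'' does not by itself give that the composite maps are $F$-isometric. For (iii) your parallel-transport argument via Proposition~\ref{pr:Ich} and Theorem~\ref{th:para} is exactly the paper's. For (ii), you take a genuinely different route. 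The paper applies Szab\'o's rigidity (Theorem~\ref{th:Szabo}) to each two-dimensional slice $\Pi\subset\Sigma_x$: either the flag curvature vanishes or $\Pi$ is Riemannian, and in the Riemannian case Proposition~\ref{pr:Bsplit}(iii) forces flatness. You instead argue intrinsically: $T\Sigma_x$ is spanned by the Chern-parallel fields $\Nabla\bb_{\eta_1},\dots,\Nabla\bb_{\eta_k}$, so the (linear, reference-vector-free) Chern connection of the Berwald subspace $\Sigma_x$ has trivial holonomy, hence vanishing curvature and $\bK\equiv 0$. Your phrasing ``that Riemannian metric is flat (it has a parallel orthonormal frame)'' conflates Chern-parallelism with parallelism for the Levi-Civita connection of some fixed $g_V$; the two connections agree only along the flagpole direction (Lemma~\ref{lm:key}), so this sentence should be replaced by the cleaner statement that vanishing Chern curvature in a Berwald space gives vanishing flag curvature directly. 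Your second alternative --- iterating Proposition~\ref{pr:Bsplit}(iii) to see that geodesics of $\Sigma_x$ are affine in $p$, so the Christoffel symbols of the Berwald subspace vanish --- is the tidiest version of your idea and is, if anything, more elementary and self-contained than the paper's detour through Szab\'o; the paper's use of Szab\'o trades some Berwald-geometry bookkeeping for a citation to a classification theorem.
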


\begin{proof}
(i)
There is nothing to prove if $(M,F)$ does not contain a straight line ($k=0$, $M'=M$).
If $(M,F)$ contains a straight line $\eta_1:\R \lra M$, then Proposition~\ref{pr:Bsplit} provides
a one-parameter family $\{\varphi^{(1)}_t\}_{t \in \R}$ of measure-preserving isometries and
an $(n-1)$-dimensional totally convex, geodesically complete
submanifold $M_1 \subset M$ of $\Ric_{N-1} \ge 0$.
Suppose that $M_1$ contains a straight line $\eta_2:\R \lra M_1$ again.
We remark that $\eta_2$ is a straight line also as a curve in $M$
thanks to Proposition~\ref{pr:Bsplit}(iii).
Similarly to $\varphi^{(1)}_t$, we obtain
$\varphi^{(2)}_s:M_1 \lra M_1$, $s \in \R$, and $M_2 \subset M_1$.
Now, define $\varphi_{(t,s)}:M=M_1 \times \R \lra M$ by
\[ \varphi_{(t,s)}(x,r) :=\varphi^{(1)}_t\big( \varphi^{(2)}_s(x),r \big)
 =\big( \varphi^{(2)}_s(x),r+t \big). \]
This map clearly preserves the measure $m$.
To see that $\varphi_{(t,s)}$ is isometric, it is sufficient to show that the map
$(x,r) \longmapsto (\varphi^{(2)}_s(x),r)$ is isometric.
Consider the vector field $V:=(\Nabla \bb_{\eta_2},0) \in TM_1 \times T\R =TM$.
Note that all integral curves of $V$ are straight lines and mutually bi-asymptotic
(compare two lines $\xi, \zeta \subset M_1$ and then $\xi$ and $\varphi_t \circ \xi$).
Therefore $V$ coincides with the gradient vector field of the Busemann function on $M$
for the line $\eta_2$ (see Lemma~\ref{lm:biasym}).
Hence the argument as in Proposition~\ref{pr:Bsplit}(i) ensures that $\varphi_{(t,s)}$ is isometric.
The relation $\varphi_{(t+t',s+s')}=\varphi_{(t,s)} \circ \varphi_{(t',s')}$ is straightforward
from the construction.
Iterating this procedure as far as possible provides the desired space $M'$ containing no straight line
as well as the family of isometries $\{ \varphi_p \}_{p \in \R^k}$.

(ii)
The flatness is a consequence of the rigidity theorem for Berwald surfaces (Theorem~\ref{th:Szabo}).
As $\Sigma_x$ is totally geodesic (in other words, \emph{locally} totally convex)
by Proposition~\ref{pr:Bsplit}(iii), it is of Berwald type and
we can apply Theorem~\ref{th:Szabo} to each two-dimensional subspace $\Pi$ of $\Sigma_x$
(precisely, $\Pi=\{\varphi_p(x)\}_{p \in P}$ for a two-dimensional affine subspace $P \subset \R^k$).
Proposition~\ref{pr:Bsplit}(iii) also verifies that $\Pi$ must be flat even if it is Riemannian, therefore $\Sigma_x$ is flat.

(iii)
Take a minimal geodesic $\xi:[0,1] \lra M'$ from $x$ to $y$ and $p \in \R^k$.
By Theorem~\ref{th:para}, the parallel transport along $\varphi_p \circ \xi$ sends
$\Nabla\bb_{\eta_i}(\varphi_p(x))$ to $\Nabla\bb_{\eta_i}(\varphi_p(y))$ for any $i=1,\ldots,k$.
Since parallel transports are linearly isometric in Berwald spaces (Proposition~\ref{pr:Ich})
and $T_{\varphi_p(x)}\Sigma_x=\mathrm{span}\{ \Nabla\bb_{\eta_i}(\varphi_p(x)) \,|\, i=1,\ldots,k \}$,
we conclude that $\Sigma_x$ is isometric to $\Sigma_y$.
$\qedd$
\end{proof}

We remark that flat Berwald spaces are necessarily locally Minkowskian (\cite[Proposition~10.5.1]{BCS}).
Hence, if $M'$ degenerates to a single point $\{x\}$ ($k=n$),
then $M=\Sigma_x$ is an $n$-dimensional Minkowski normed space.
In general, however, it is unclear from the infinitesimal discussion in Proposition~\ref{pr:Bsplit} and
Corollary~\ref{cr:Bsplit} if $\Sigma_x$ is \emph{globally} totally convex in $M$
(see also Remark~\ref{rm:k<l} below).
One may be able to split $M'$ again in a slightly different way as follows.

\begin{theorem}[Isometric splitting of Berwald spaces]\label{th:Bsplit}
Suppose that $(M,F,m)$ is of Berwald type and forward and backward complete,
and satisfies $\Ric_N \ge 0$ for some $N \in [n,\infty]$.
Assume also that $\Psi$ is bounded above if $N=\infty$.
Then the following hold.
\begin{enumerate}[{\rm (i)}]
\item
There exists an $l$-parameter family of measure-preserving isometries $\varphi_p:M \lra M$, $p \in \R^l$,
and an $(n-l)$-dimensional totally convex, geodesically complete submanifold $\ol{M} \subset M$
such that
\begin{itemize}
\item any Busemann function associated with a straight line in $M$ is constant on $\ol{M};$
\item $(\ol{M},F|_{T\ol{M}},\ol{m})$ is of Berwald type with $\Ric_{N-l} \ge 0$,
where $\ol{m}:=m|_{\ol{M}};$
\item $\bigsqcup_{p \in \R^l} \varphi_p(\ol{M})=M;$
\item $\varphi_{p+q}=\varphi_q \circ \varphi_p$ for any $p,q \in \R^l$.
\end{itemize}
In particular, $(M,m)$ admits a diffeomorphic, measure-preserving splitting
$(M,m)=(\ol{M} \times \R^l,\ol{m} \times \bL^l)$.

\item
For each $x \in M$, $\Sigma_x:=\{ \varphi_p(x) \}_{p \in \R^l}$ is an $l$-dimensional submanifold
of $M$ whose flag curvature vanishes everywhere.

\item
For any $x,y \in \ol{M}$, $(\Sigma_x,F|_{T\Sigma_x})$ is isometric to $(\Sigma_y,F|_{T\Sigma_y})$.
\end{enumerate}
\end{theorem}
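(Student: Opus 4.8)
The plan is to combine Corollary~\ref{cr:Bsplit} with a further analysis that takes into account \emph{all} straight lines of $M$, not only those that happen to lie in the successively peeled-off submanifolds. Every straight line $\eta:\R\to M$ has $\Nabla^2\bb_\eta\equiv 0$ by Theorem~\ref{th:para}, so $\Nabla\bb_\eta$ is an $F$-unit, $D$-parallel vector field (no reference vector is needed in a Berwald space), and by Lemma~\ref{lm:biasym} its integral curves are precisely the straight lines bi-asymptotic to $\eta$; conversely any such parallel unit field arises this way. Since $D$ is linear and torsion-free on a Berwald space, two such fields have vanishing Lie bracket, so their flows commute, and each flow consists of measure-preserving isometries by the argument of Proposition~\ref{pr:Bsplit}(i) (Lemma~\ref{lm:key} together with $\Nabla^2\bb_\eta\equiv 0$) and iteration of Corollary~\ref{cr:dsplit}.

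Let $E_x\subset T_xM$ be the linear span of the velocities $\Nabla\bb_\eta(x)$ over all straight lines $\eta$ of $M$. Because each $\Nabla\bb_\eta$ equals its own parallel transport and parallel transport is invertible, $E$ is a $D$-parallel distribution of some constant rank $l$, and one may pick straight lines $\eta_1,\ldots,\eta_l$ whose gradients form, at every point, a basis of $E$. Moreover each $d\bb_{\eta_i}$ is a parallel $1$-form, since in a Berwald space parallel transport is a linear $F$-isometry (Proposition~\ref{pr:Ich}) and hence commutes with the Legendre transform; thus $c_{ij}:=d\bb_{\eta_i}(\Nabla\bb_{\eta_j})$ is a global constant with $c_{ii}=1$ (this also follows from $(\bb_{\eta_i}\circ\xi)''\equiv 0$, Lemma~\ref{lm:tconv}). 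Granting that $(c_{ij})$ is invertible, the differentials $d\bb_{\eta_1},\ldots,d\bb_{\eta_l}$ are pointwise independent, so $\ol{M}:=\bigcap_{i=1}^l\bb_{\eta_i}^{-1}(\bb_{\eta_i}(x_0))$ is an $(n-l)$-dimensional submanifold, totally convex and geodesically complete by Lemma~\ref{lm:tconv}. Letting $\varphi_p$ ($p\in\R^l$) be the commuting flow of $\Nabla\bb_{\eta_1},\ldots,\Nabla\bb_{\eta_l}$, the map $(x,p)\mapsto\varphi_p(x)$ is a measure-preserving diffeomorphism $\ol{M}\times\R^l\to M$; $(\ol{M},F|_{T\ol{M}})$ is again of Berwald type by total convexity (characterization (e) of \cite[Theorem~10.2.1]{BCS}), and $\Ric_{N-l}\ge 0$ follows by iterating Proposition~\ref{pr:Bsplit}(ii)--(iii). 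Any Busemann function $\bb_\eta$ of a line of $M$ satisfies $\Nabla\bb_\eta\in E$, hence it is affine along geodesics of $\ol{M}$ with vanishing derivative there, so it is constant on $\ol{M}$. Parts (ii) and (iii) are obtained exactly as in Corollary~\ref{cr:Bsplit}: flatness of $\Sigma_x=\{\varphi_p(x)\}$ from Szab\'o's rigidity (Theorem~\ref{th:Szabo}) applied on each $2$-plane together with Proposition~\ref{pr:Bsplit}(iii), and the isometry $\Sigma_x\cong\Sigma_y$ from the fact that parallel transport along a curve in $\ol{M}$ is a linear $F$-isometry (Proposition~\ref{pr:Ich}) carrying $\Nabla\bb_{\eta_i}(x)$ to $\Nabla\bb_{\eta_i}(y)$ (Theorem~\ref{th:para}), together with $T_x\Sigma_x=\mathrm{span}\{\Nabla\bb_{\eta_i}(x)\}$.

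The heart of the matter is the invertibility of $(c_{ij})$, equivalently the assertion that every $F$-unit vector of $E_x$ is the initial velocity of a straight line; the difficulty is genuinely Finsler, since the Legendre transform is nonlinear and so the $d\bb_{\eta_i}$ need not be independent merely because the $\Nabla\bb_{\eta_i}$ are. I would approach it as follows: given $0\ne W=\sum_i a_i\Nabla\bb_{\eta_i}$, which is $D$-parallel with $D_WW=\sum_{i,j}a_ia_jD_{\Nabla\bb_{\eta_i}}\Nabla\bb_{\eta_j}=0$ and $F(W)$ constant along its flow, its integral curve $\gamma$ is a constant-speed geodesic, and one must promote $\gamma$ to a minimizing line --- either by controlling the length deficit $d(\gamma(s),\gamma(t))-F(W)(t-s)$ through the affine Busemann coordinates $\bb_{\eta_i}\circ\gamma$ and comparison with the line buddies of the $\eta_i$, or by establishing and applying a Finsler analogue of the statement that a non-constant affine function on a forward- and backward-complete space of nonnegative weighted Ricci curvature yields a straight line (here $\Nabla^2$ of an affine function vanishes, as is visible from the vanishing second fundamental form of the totally convex $\ol{M}$, after which the Cauchy--Schwarz inequality $du(v)\le F(\Nabla u)F(v)$ together with completeness forces the gradient flow to be minimizing). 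Carrying this out, and checking thereby that the resulting $l$ is intrinsically characterized as the maximal number of independent affine Busemann functions so that $\ol{M}$ is well-defined, is where the real work lies; everything else is bookkeeping built on Proposition~\ref{pr:Bsplit} and Corollary~\ref{cr:Bsplit}.
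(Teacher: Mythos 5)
Your plan differs from the paper's in a way that is both interesting and ultimately leaves two genuine gaps, one of which you flag and one of which you pass over.

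The paper proceeds \emph{iteratively} rather than all-at-once. Starting from the $M'\times\R^k$ splitting of Corollary~\ref{cr:Bsplit}, it asks whether there is \emph{some} straight line $\eta$ of $M$ whose Busemann function is non-constant on the current residual $M'$; if so, it shows the $TM'$-component of $\Nabla\bb_\eta$ is a parallel vector field generating a one-parameter group of isometries of $M'$, splits off one more factor, and repeats. The stopping criterion is thus phrased directly in terms of non-constancy, i.e.\ in terms of the differentials $d\bb_\eta|_{TM'}$ being nonzero. This avoids ever having to compare a span of gradients with a span of differentials.

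Your all-at-once scheme, by contrast, first defines $E$ as the span of the \emph{gradients} $\Nabla\bb_\eta$ and then needs two facts to relate $E$ to the Busemann level sets: (a) the $l$ gradients $\Nabla\bb_{\eta_1},\dots,\Nabla\bb_{\eta_l}$ being independent forces the differentials $d\bb_{\eta_1},\dots,d\bb_{\eta_l}$ to be independent (your invertibility of $(c_{ij})$), and (b) for an arbitrary line $\eta$, $\Nabla\bb_\eta\in E$ forces $d\bb_\eta$ to lie in $\mathrm{span}\{d\bb_{\eta_i}\}$, which is what you need to conclude $\bb_\eta$ is constant on $\ol{M}=\bigcap_i\bb_{\eta_i}^{-1}(c_i)$. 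You explicitly acknowledge (a) as unresolved and sketch strategies, but (b) is asserted without argument in the sentence ``any Busemann function $\bb_\eta$ of a line of $M$ satisfies $\Nabla\bb_\eta\in E$, hence it is affine along geodesics of $\ol{M}$ with vanishing derivative there.'' Neither (a) nor (b) follows from the linear-algebra of the gradients alone, because $\cL^*$ is nonlinear: $T\ol{M}$ is $\bigcap_i\ker d\bb_{\eta_i}$, not the $g$-orthogonal complement of $E$ for any single $g$, and $d\bb_\eta(v)=g_{\Nabla\bb_\eta}(\Nabla\bb_\eta,v)$ need not vanish on $T\ol{M}$ merely because $\Nabla\bb_\eta$ is a combination of the $\Nabla\bb_{\eta_i}$. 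In short, the nonlinearity of the Legendre transform, which you correctly identify as the obstruction in (a), hits (b) in exactly the same way, and your sketch does not close either gap. The paper's iterative formulation is precisely designed so that the question ``is there still a non-constant Busemann function on the residual?'' never requires translating between gradient spans and differential spans. If you want to pursue the all-at-once route you would first need a Finsler lemma showing that, for a family of parallel unit gradient fields with a common harmonic primitive structure, the span of the gradients and the span of the differentials have equal dimension; that lemma is not in the paper and is not obviously true.

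One smaller remark: your observation that parallel transport commutes with the Legendre transform (so $d\bb_\eta$ is parallel whenever $\Nabla\bb_\eta$ is) is correct and a nice packaging, consistent with what the paper uses implicitly; the affine property $(\bb_\eta\circ\xi)''\equiv 0$ from Lemma~\ref{lm:tconv} is indeed the same fact in disguise. Parts (ii) and (iii) of your proposal coincide with the paper's argument.
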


\begin{proof}
(i)
We start from the splitting $(M,m)=(M' \times \R^k, m' \times \bL^k)$ in Corollary~\ref{cr:Bsplit},
and shall use the same notations.
Assume that there is a straight line $\eta:\R \lra M$ such that
the Busemann function $\bb_{\eta}$ is not constant on $M'$.
We put $M'':=\bb_{\eta}^{-1}(0) \cap M'$ and observe that it is a totally convex, geodesically complete,
$(n-k-1)$-dimensional submanifold of Berwald type.
We next split $M'$.
In the canonical coordinate $(x,r_1,\ldots,r_k)$ of $M' \times \R^k$,
$\dot{\eta}(0) \in T_{\eta(0)}M$ is written as $(v,a_1,\ldots,a_k)$ with nonzero $v$.
As $\Nabla\bb_{\eta}$ is a parallel vector field (and so are $\Nabla\bb_{\eta_1},\ldots,\Nabla\bb_{\eta_k}$),
the $T\R^k$-component of $\Nabla\bb_{\eta}$ is always $(a_1, \ldots, a_k)$.
Observe that the $TM'$-component of $\Nabla\bb_{\eta}$ is a parallel vector field as well
due to the linearity of the covariant derivative.
Hence, for any $p \in \R^k$ and $x \in M'$, $\Nabla\bb_{\eta}(x)$ and
$\Nabla\bb_{\eta}(\varphi_p(x))$ have the same $T_xM'$-components.
Therefore the one-parameter family of measure-preserving isometries $\psi_t:M \lra M$, $t \in \R$,
generated from $\Nabla\bb_{\eta}$ splits into
\[ \psi_t=(\psi_t^{(1)},\psi_t^{(2)}) \quad \text{with}\quad
 \psi_t^{(1)}:M' \lra M', \quad \psi_t^{(2)}:\R^k \lra \R^k, \]
and is written in the above coordinate as
\[ \psi_t(x,r_1,\ldots,r_k)=\big( \psi_t^{(1)}(x), r_1+ta_1,\ldots,r_k+ta_k \big). \]
Thus $\psi^{(1)}_t=[(\Id_{M'}, \psi^{(2)}_{-t}) \circ \psi_t]|_{M'}$
derives a diffeomorphic, measure-preserving splitting
$(M',m')=(M'' \times \R,m'|_{M''} \times \bL^1)$.

We remark that the geodesic equation of $M'$ splits into those of $M'' \times \eta(\R)$
and then of $M'' \times \R$ (via the projection of $\eta(\R)$ to $M'$ in the splitting $M' \times \R^k$).
Then we see that $(M'',F,m'|_{M''})$ satisfies $\Ric_{N-k-1} \ge 0$ similarly to Proposition~\ref{pr:Bsplit}(ii).
Iterating this construction as far as possible, we eventually obtain the desired submanifold $\ol{M}$.

(ii) and (iii) are shown in the same way as Corollary~\ref{cr:Bsplit}(ii), (iii).
$\qedd$
\end{proof}

Due to (i) above, starting from a point $x \in M$,
we can characterize $\ol{M} \ni x$ by $\ol{M}=\bigcap_{\eta} \bb_{\eta}^{-1}(0)$,
where $\eta$ runs over all straight lines parametrized as $\bb_{\eta}(x)=0$.

\begin{remark}\label{rm:k<l}
Obviously $k \le l$ holds for $k$ in Corollary~\ref{cr:Bsplit} and $l$ in Theorem~\ref{th:Bsplit}.
The author does not know any example satisfying $k<l$.
We can verify $k=l$ if, for $\eta$ in the proof of Theorem~\ref{th:Bsplit}(i),
the projection $\rho(\eta):\R \lra M'$ is a straight line
(note the difference between $\eta_2 \subset M_1$
in Corollary~\ref{cr:Bsplit}(i) and $\eta \not\subset M'$ in Theorem~\ref{th:Bsplit}(i)).
The straightness is clear in the Riemannian case by the isometry of the splitting,
however, unclear in the Finsler setting from our infinitesimal argument
(we only know that $\rho(\eta)$ is geodesic by Proposition~\ref{pr:Bsplit}(iii)).
The difficulty comes from the fact that a (Minkowski) normed space $(\R^n,|\cdot|)$
may contain a triple $v,w_1,w_2 \in \R^n \setminus \{0\}$ such that
\[ g_v(v,w_1)=g_v(v,w_2)=0, \quad |w_1|<|w_2|, \quad |w_1+v|>|w_2+v|. \]
\end{remark}

Having Theorem~\ref{th:Bsplit} at hand,
we can obtain some topological information of compact Berwald spaces
along the lines of Cheeger and Gromoll's classical theory (\cite{CG1}, \cite{CG2}).

\begin{theorem}[A Betti number estimate]\label{th:Betti}
Let $(M,F,m)$ be a compact Finsler manifold of Berwald type satisfying
$\Ric_N \ge 0$ for some $N \in [n,\infty]$.
Denote by $(\wt{M},\wt{F})$ the universal Finsler covering of $(M,F)$
and by $\wt{M}=\ol{M} \times \R^l$ its splitting obtained in Theorem~$\ref{th:Bsplit}$.
Then we have the following.
\begin{enumerate}[{\rm (i)}]
\item
$\ol{M}$ is compact.
\item
There exists a finite normal subgroup $\Xi$ of the fundamental group $\pi_1(M)$
such that $\pi_1(M)/\Xi$ contains $\Z^l$ as a normal subgroup of finite index.
\item
The first Betti number of $M$ is at most $l$.
\end{enumerate}
\end{theorem}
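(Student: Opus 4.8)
The plan is to run the classical Cheeger–Gromoll argument (\cite{CG1}, \cite{CG2}) in the Berwald setting, using Theorem~\ref{th:Bsplit} as the splitting input. Since $(M,F)$ is compact, it is automatically both forward and backward complete, so Theorem~\ref{th:Bsplit} applies to $(\wt{M},\wt{F},\wt{m})$ (with the lifted measure), yielding the diffeomorphic, measure-preserving splitting $\wt{M}=\ol{M} \times \R^l$ together with the $l$-parameter family of isometries $\{\varphi_p\}_{p \in \R^l}$ and the submanifold $\ol{M}$, on which every Busemann function of $\wt{M}$ is constant. The deck transformation group $\pi_1(M)$ acts on $\wt{M}$ by isometries.

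First I would prove (i). The key observation is that $\{\varphi_p\}_{p \in \R^l}$ is a transitive family of isometries on the $\R^l$-factor, and any isometry $\gamma \in \pi_1(M)$ must permute the straight lines of $\wt{M}$ and hence preserve the characterization $\ol{M}=\bigcap_{\eta}\bb_{\eta}^{-1}(0)$ up to translation in the $\R^l$-direction; more precisely, composing $\gamma$ with a suitable $\varphi_p$ fixes the $\R^l$-coordinate, so $\gamma$ descends to an isometry of $\ol{M}$, and $\pi_1(M)$ acts on $\ol{M}$ by a group of isometries with compact quotient (since $M$ is compact). A compact Riemannian-type argument then forces $\ol{M}$ to contain no straight line and to have diameter bounded by the quotient, so $\ol{M}$ is compact. (Alternatively: if $\ol{M}$ were noncompact it would contain a ray, and by a limiting/Toponogov-type argument this ray would extend to a straight line in $\wt{M}$ whose Busemann function is nonconstant on $\ol{M}$, contradicting the defining property of $\ol{M}$.)

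For (ii), I would analyze the homomorphism $\pi_1(M) \to \Isom(\R^l)$ obtained by projecting the action of $\pi_1(M)$ on $\wt{M}=\ol{M}\times\R^l$ to the Euclidean factor (the action does split, because each $\gamma$ commutes with the $\varphi_p$'s up to the structure of Theorem~\ref{th:Bsplit}(i), namely $\varphi_{p+q}=\varphi_q\circ\varphi_p$, so the $\R^l$-factor is $\gamma$-invariant). The image is a discrete cocompact subgroup of $\Isom(\R^l)$ because $\ol{M}$ is compact (by (i)) and $M=\wt{M}/\pi_1(M)$ is compact; by Bieberbach's theorem this image has a normal subgroup isomorphic to $\Z^l$ of finite index. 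Let $\Xi$ be the kernel of $\pi_1(M)\to\Isom(\R^l)$; then $\Xi$ acts on $\wt{M}$ fixing the $\R^l$-coordinate, hence acts on the compact space $\ol{M}$ properly discontinuously, so $\ol{M}/\Xi$ is a compact manifold and $\Xi$ is the fundamental group of a closed aspherical-or-not manifold — but the point is that $\Xi$ acts freely and cocompactly on the compact $\ol{M}$, forcing $\Xi$ to be finite. This gives (ii). Then (iii) is immediate: $b_1(M)=\mathrm{rank}\, H_1(M;\Z)=\mathrm{rank}(\pi_1(M)^{\mathrm{ab}})$, and since $\pi_1(M)$ has a finite normal subgroup $\Xi$ with $\pi_1(M)/\Xi$ virtually $\Z^l$, the torsion-free rank of the abelianization is at most $l$.

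The main obstacle I expect is establishing the compactness of $\ol{M}$ in step (i) — specifically, showing that the $\pi_1(M)$-action genuinely descends to an action on $\ol{M}$ with compact quotient, which requires knowing that each deck transformation preserves the splitting (maps the foliation by $\R^l$-orbits to itself and $\ol{M}$-slices to $\ol{M}$-slices up to Euclidean translation). In the Riemannian case this follows from uniqueness of the maximal Euclidean de~Rham factor; here it should follow from the intrinsic characterization $\ol{M}=\bigcap_\eta \bb_\eta^{-1}(0)$ (which any isometry respects, since it permutes straight lines and their Busemann functions) combined with the fact that $\{\varphi_p\}$ acts simply transitively on the fibers of $\wt{M}\to\ol{M}$. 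I would also need to confirm that a Finsler analogue of Bieberbach's theorem is not required — only the classical Euclidean Bieberbach theorem, applied to the \emph{image} group in $\Isom(\R^l)$, which is a standard Euclidean isometry group, so no Finsler-specific rigidity is needed there.
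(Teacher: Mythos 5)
Your proposal is correct and follows essentially the same Cheeger--Gromoll route as the paper: you correctly identify the crux that deck transformations preserve the product splitting because isometries permute straight lines and hence Busemann functions, so the characterization $\ol{M}=\bigcap_{\eta}\bb_{\eta}^{-1}(0)$ is sent to another $\ol{M}$-slice, and your Bieberbach argument for (ii) and the deduction of (iii) match the paper up to reorganization (the paper packages it as $\Isom(\wt{M},\wt{F})=\Isom(\ol{M},\wt{F})\times\Isom(\Sigma_x,\wt{F})$ with the first factor compact). One caveat on (i): your first sketch (``$\pi_1(M)$ acts on $\ol{M}$ with compact quotient, so $\ol{M}$ is compact'') does not work as stated — cocompactness of the induced $\ol{M}$-action is not automatic and ``diameter bounded by the quotient'' is not an argument — but the ``alternative'' you offer (a ray in $\ol{M}$, translated by deck transformations into a fixed compact fundamental domain, limits to a straight line whose Busemann function is non-constant on some $\Phi^{(i)}(\ol{M})$, contradicting that isometries preserve the defining property of $\ol{M}$) is precisely the paper's proof.
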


\begin{proof}
Let us first give a remark on the action of the isometry group $\Isom(\wt{M},\wt{F})$.
For each $\Phi \in \Isom(\wt{M},\wt{F})$ and any straight line $\eta:\R \lra \wt{M}$,
it clearly holds $\bb_{\Phi \circ \eta} \circ \Phi=\bb_{\eta}$.
Hence $\Phi(\ol{M})$ inherits the property that all Busemann functions are constant.

(i)
Let $\Gamma \subset \Isom(\wt{M},\wt{F})$ be the deck transformation group of
the universal covering $\pi:\wt{M} \lra M$.
Since $M$ is compact, we can take a compact fundamental domain $K \subset \wt{M}$
such that $\Gamma(K)=\wt{M}$.
Now, if $\ol{M}$ is not compact, then there is a ray $\eta:[0,\infty) \lra M$ contained in $\ol{M}$.
For each $i \in \N$, we can choose $\Phi^{(i)} \in \Gamma$ satisfying $\Phi^{(i)}(\eta(i)) \in K$.
Then $\eta_i(t):=\Phi^{(i)}(\eta(i+t))$, $t \in [-i,\infty)$,
is a globally minimizing geodesic with $\eta_i(0) \in K$, and the compactness of $K$ ensures that
$\{\eta_i\}_{i \in \N}$ has a subsequence convergent to some geodesic $\eta_{\infty}:\R \lra M$
($\dot{\eta}_i(0) \to \dot{\eta}_{\infty}(0)$ to be precise).
This geodesic $\eta_{\infty}$ is a straight line by construction, and the Busemann function $\bb_{\eta_{\infty}}$
is not constant on $\Phi^{(i)}(\ol{M})$ for large $i$ (since $\dot{\eta}_i \in T[\Phi^{(i)}(\ol{M})]$).
This is a contradiction, so that $\ol{M}$ is compact.

(ii)
Fix $\Phi \in \Isom(\wt{M},\wt{F})$.
On the one hand, for each $p \in \R^l$, $\Phi(\varphi_p(\ol{M}))$
coincides with $\varphi_q(\ol{M})$ for some $q \in \R^l$
from the construction of $\ol{M}$ (since $\Phi$ preserves Busemann functions).
On the other hand, for each $x \in \ol{M}$, we find that
$\Phi(\Sigma_x)=\Sigma_y$ for some $y \in \ol{M}$ by the same reasoning.
Then the projection
\[ \Phi_1:=\varphi_{p-q} \circ \Phi|_{\varphi_p(\ol{M})}
 \in \Isom\big( \varphi_p(\ol{M}),\wt{F} \big) =\Isom(\ol{M},\wt{F}) \]
(we identified $\Isom(\varphi_p(\ol{M}),\wt{F})$ with $\Isom(\ol{M},\wt{F})$
via $\varphi_p$) is independent of the choice of $p \in \R^l$
because $\Phi$ preserves the $\R^l$-directions (that is, $\Phi(\Sigma_x)=\Sigma_y$ and $\Phi_1(x)=y$).
We similarly see that $\Phi_2:=\Phi|_{\Sigma_x} \in \Isom(\Sigma_x,\wt{F})$
is independent of the choice of $x \in \ol{M}$ by identifying
$\Sigma_x$ and $\Sigma_y$ via the coordinate of $\R^l$
($\Phi_2(p)=q$, $\Sigma_x$ and $\Sigma_y$ are actually isometric by Theorem~\ref{th:Bsplit}(iii)).
Consequently, we obtain
\[ \Isom(\wt{M},\wt{F}) =\Isom(\ol{M},\wt{F}) \times \Isom(\Sigma_x,\wt{F}). \]

Since $\Isom(\ol{M},\wt{F})$ is compact,
$\Xi:=\{ \Phi \in \Gamma \,|\, \Phi_2=\Id_{\Sigma_x} \}$
is a finite normal subgroup of $\Gamma=\pi_1(M)$.
The covering
\[ \wt{M}=\ol{M} \times \R^l\ \lra\ \wt{M}/\Xi =:\ol{M}' \times \R^l \]
leads
\[ \Isom(\wt{M}/\Xi,\wt{F}) =\Isom(\ol{M}',\wt{F}) \times \Isom(\Sigma_x,\wt{F}) \supset \Gamma/\Xi, \]
and the projection of $\Gamma/\Xi$ to $\Isom(\Sigma_x,\wt{F})$ is an isomorphism into its image.
As $\Sigma_x$ is flat (Theorem~\ref{th:Bsplit}(ii)),
$\Isom(\Sigma_x,\wt{F})$ is a subgroup of the isometry group $\Isom(\R^l)$
of $\R^l$ with the standard Euclidean metric.
Thus $\Gamma/\Xi$ is isomorphic to a discrete uniform subgroup of $\Isom(\R^l)$.
Therefore $\Gamma/\Xi$ is isomorphic to a crystallographic group of $\R^l$,
and contains $\Z^l$ as a normal subgroup of finite index by the Bieberbach theorem.

(iii)
This is a consequence of (ii).
$\qedd$
\end{proof}

Recall that, if $l=n$, then $\wt{M}$ is a Minkowski normed space.
Our argument was indebted to various fine properties of Berwald spaces,
even at the early stage (see Lemma~\ref{lm:tconv}, Proposition~\ref{pr:Bsplit}).
It is unclear if any of results in this section can be generalized to general Finsler manifolds.

{\small

}

\end{document}